\documentclass[11pt,reqno]{amsart}

\newtheorem{theorem}{Theorem}[section]
\newtheorem{lemma}[theorem]{Lemma}
\newtheorem{proposition}[theorem]{Proposition}

\theoremstyle{definition}

\theoremstyle{remark}
\newtheorem{remark}[theorem]{Remark}

\numberwithin{equation}{section}

\begin{document}

\title{Using Twisted Alexander Polynomials to Detect Fiberedness}

\author{Azadeh Rafizadeh}
\address{Department of Physics and Mathematics, William Jewell College}
\curraddr{Liberty, MO 64068-1896}
\email{rafizadeha@william.jewell.edu}

\begin{abstract}
In this paper we use twisted Alexander polynomials to prove
that the exterior of a particular graph knot is not fibered.
Then we build three 2-component graph links out of this knot,
and use similar techniques to discuss their fiberedness.
\end{abstract}

\maketitle

\textbf{Acknowledgement:} This work was done in preparation for the author's dissertation. I would like to thank my advisor, Stefano Vidussi.

\section{Introduction and Main Results}

The main purpose of this paper is to find explicit applications
of the relationship between twisted Alexander polynomials and
fiberedness. For twisted Alexander polynomials, we will follow the definition given in \cite{FK}. In particular, we studied a graph knot $K$
that is included in a homology sphere $\Sigma$ (different from
the 3-dimensional sphere $S^3$), and three 2-component links
that have $K$ as one of their components. For graph links, D.
Eisenbud and W. Neumann introduced splice diagrams and
developed a method to use the combinatorial information
included in splice diagrams to determine fiberedness and the
Thurston norm, \cite{EN}. However, the technique they use only
applies to graph links, whereas the method of this paper can
theoretically be applied to any 3-manifold. \\
The following theorem of C. McMullen shows the ability of the
(ordinary) Alexander polynomial to provide information on the
Thurston norm and fiberedness for a general 3-manifold $N$. If
$\phi=(m_1,...,m_n)\in H^1(N;\mathbb{Z})$, then div$(\phi)$ is the greatest common
divisor of $m_1, ..., m_n$.

\begin{theorem}\label{mcm}
  \textnormal{(McMullen, \cite{McMullen}) } Let $N$ be a compact, connected,
 orientable 3-manifold whose boundary (if any) is a union of
 tori. Then for any
 $\phi \in H^1(N;\mathbb{Z})$ \

$$deg(\Delta_{N,\phi})\leq \|\phi\|_T+\left\{\begin{array}{cc}
            0,& b_1(N)\geq 2\\
            div (\phi)\cdot (1+b_3(N)),& b_1(N)=1 \\
\end{array}\right.$$

Moreover, if $\phi$ is fibered, $\Delta_{N,\phi}$ is monic, and
equality holds.
\end{theorem}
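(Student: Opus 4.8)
The plan is to follow McMullen's route of recasting the statement as a comparison of two seminorms on $H^1(N;\mathbb{R})$: the \emph{Alexander norm} $\|\cdot\|_A$, read off from the Newton polytope of the multivariable Alexander polynomial, and the Thurston norm $\|\cdot\|_T$. Write $H=H_1(N;\mathbb{Z})/(\text{torsion})$, let $\Delta_N\in\mathbb{Z}[H]$ be the multivariable Alexander polynomial with Newton polytope $\mathrm{Newt}(\Delta_N)\subset H\otimes\mathbb{R}=H_1(N;\mathbb{R})$, and set $\|\phi\|_A=\max\{\phi(u)-\phi(v):u,v\in\mathrm{Newt}(\Delta_N)\}$. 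First I would record how $\deg\Delta_{N,\phi}$ and $\|\phi\|_A$ are related: $\Delta_{N,\phi}$ is obtained from $\Delta_N$ by the specialization $\mathbb{Z}[H]\to\mathbb{Z}[t^{\pm1}]$ induced by $\phi$, possibly together with a normalizing factor coming from $H_0$ of the infinite cyclic cover (and, when $N$ is closed, from $H_2$). Since specializing can only shrink a Newton polytope, this gives $\deg\Delta_{N,\phi}\le\|\phi\|_A$ plus at most the degree of that factor, and the factor is trivial when $b_1(N)\ge 2$ — the augmentation ideal of $\mathbb{Z}[H]$ is not principal, so $H_0$ contributes no divisor — while it has degree exactly $\mathrm{div}(\phi)(1+b_3(N))$ when $b_1(N)=1$: a factor $(t-1)^{\mathrm{div}(\phi)}$ from $H_0$, together with an equal factor from $H_2$ precisely when $N$ is closed, i.e. $b_3(N)=1$. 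Both norms are homogeneous of degree one in $\phi$, so it then suffices to prove $\|\phi\|_A\le\|\phi\|_T$ for $\phi$ primitive.

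For this core inequality I would take an embedded, oriented, norm-minimizing surface $S\subset N$ dual to $\phi$, so that $\chi_-(S)=\|\phi\|_T$, chosen connected, and cut $N$ along $S$ to obtain a compact manifold $M$ with two marked copies $S_\pm\subset\partial M$ of $S$; then the infinite cyclic cover is $\widetilde N_\phi=\bigcup_{n\in\mathbb{Z}}M_n$, where $S_+^{(n)}$ is glued to $S_-^{(n+1)}$ and the deck transformation $t$ shifts $M_n\mapsto M_{n+1}$. Running the Mayer--Vietoris sequence of this decomposition over $\Lambda=\mathbb{Q}[t^{\pm1}]$ presents $H_1(\widetilde N_\phi;\mathbb{Q})$ by the $\Lambda$-linear map
\[
H_1(S;\mathbb{Q})\otimes_{\mathbb{Q}}\Lambda \xrightarrow{\ \iota_+-t\,\iota_-\ } H_1(M;\mathbb{Q})\otimes_{\mathbb{Q}}\Lambda ,
\]
together with the analogous lower-degree contributions of $H_0$ and $H_2$ of the pieces, where $\iota_\pm\colon H_1(S;\mathbb{Q})\to H_1(M;\mathbb{Q})$ are the two boundary inclusions. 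Because the entries of this matrix are affine in $t$, and the ``half lives, half dies'' principle applied to $S_\pm\subset\partial M$ bounds the number of generators genuinely needed in terms of $-\chi(S)$, the order of the presented module is a polynomial whose $t$-degree — equivalently the width of $\mathrm{Newt}(\Delta_N)$ in the direction $\phi$, once the whole argument is carried out over the coefficient ring $\mathbb{Z}[H]$ — is at most $-\chi(S)=\|\phi\|_T$, again up to the $H_0/H_2$ terms already isolated. The hard part will be exactly this bookkeeping: replacing the crude bound $b_1(M)$ by the correct relative-homology quantity, and keeping precise track of how $H_0$ and $H_2$ of the covering pieces feed into the order ideal, so that the correction emerges as precisely $0$ when $b_1(N)\ge 2$ and precisely $\mathrm{div}(\phi)(1+b_3(N))$ when $b_1(N)=1$.

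For the fibered statement, if $\phi$ is fibered then $N$ is the mapping torus of a homeomorphism $h\colon F\to F$ of the (automatically norm-minimizing) fiber $F$, the cover $\widetilde N_\phi$ deformation retracts onto $F$, and $t$ acts on $H_*(F;\mathbb{Q})$ by $h_*$. Hence $\Delta_{N,\phi}(t)$ is, up to the same $H_0/H_2$ normalization, the characteristic polynomial $\det(t\,\mathrm{Id}-h_*\mid H_1(F;\mathbb{Q}))$, which is monic — and the factors removed by the normalization genuinely divide it, since $b_1(N)\ge 2$ forces $1$ to be an eigenvalue of $h_*$, of even multiplicity when $F$ is closed. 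Comparing $\dim H_1(F;\mathbb{Q})$ with $-\chi(F)$ — it exceeds $-\chi(F)$ by $1$ if $F$ has boundary and by $2$ if $F$ is closed, matching $1+b_3(N)$ in the case $b_1(N)=1$ and being absorbed by the normalization when $b_1(N)\ge 2$ — shows that in the fibered case every inequality used above is an equality, so $\Delta_{N,\phi}$ is monic and the bound is attained. For $\phi=d\phi_0$ the same computation with the disconnected fiber $\bigsqcup^{d}F$ rescales both sides by $d$ and delivers the general statement.
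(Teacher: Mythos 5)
The paper does not prove this statement: it is quoted as a theorem of McMullen and used as a black box, so there is no internal proof to compare yours against --- the relevant comparison is with the original argument in \cite{McMullen}. Your outline is, in fact, exactly that argument: introduce the Alexander norm from the Newton polytope of the multivariable polynomial, bound it by the Thurston norm by cutting along a norm-minimizing surface dual to $\phi$ and presenting $H_1$ of the infinite cyclic cover via $\iota_+ - t\,\iota_-$, explain the $b_1(N)\ge 2$ versus $b_1(N)=1$ dichotomy through the orders of $H_0$ (and $H_2$ when $N$ is closed), and treat the fibered case through the characteristic polynomial of the monodromy. So the route is correct and is the standard one.

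That said, what you have written is a strategy rather than a proof, and you have deferred precisely the two points where the work lies. First, the surface-cutting argument as you set it up only yields $\deg(\Delta_{N,\phi})\le b_1(S)\le \|\phi\|_T+(1+b_3(N))$ for primitive $\phi$; to remove the correction when $b_1(N)\ge 2$ it is not enough to observe that the augmentation ideal of $\mathbb{Z}[H]$ is not principal --- you need the Torres-type relation showing that the one-variable specialization $\Delta_{N,\phi}(t)$ carries an explicit extra factor of $(t-1)^{1+b_3(N)}$ relative to $\Delta_N$ evaluated along $\phi$, together with the fact that specialization does not collapse the Newton polytope in the direction $\phi$ (true only for $\phi$ outside finitely many hyperplanes, so a limiting or continuity argument for the norms is needed). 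Second, a norm-minimizing surface dual to a primitive class cannot always be taken connected, so $b_0(S)$ and $b_2(S)$ enter the Mayer--Vietoris bookkeeping and must be shown to be absorbed by the same $H_0/H_2$ corrections. Neither point is a fatal obstacle --- both are carried out in \cite{McMullen} --- but as submitted the proposal asserts, rather than establishes, the two inequalities that constitute the theorem.
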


It is well-known that the converse of Theorem \ref{mcm} is not true as we will
show for the graph knot $K$, which has the splice diagram shown in Figure 1.\newpage

\begin{proposition}\label{calcsfork} The genus of the knot $K$
is $1$, it has Alexander polynomial equal to $t^2-t+1$, and it is
not fibered.\end{proposition}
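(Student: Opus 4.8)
The plan is to extract all three assertions from the combinatorics of the splice diagram of $K$ (Figure~1), using the Eisenbud--Neumann calculus \cite{EN} for the genus and the Alexander polynomial, and a twisted refinement of Theorem~\ref{mcm} for non-fiberedness.

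\textbf{Alexander polynomial and genus.} Since $K$ is a graph knot, its splice diagram encodes enough data to compute $\Delta_K$ directly. I would apply the Eisenbud--Neumann formula \cite{EN} that writes the one-variable Alexander polynomial of a graph link in a homology sphere as an explicit finite product indexed by the nodes of the diagram, the exponents being assembled from the edge weights and the linking numbers read off the diagram; substituting the weights of Figure~1 should collapse this product to $t^2-t+1$. (Alternatively, one can extract a presentation of $\pi_1(\Sigma\setminus K)$ from the splice diagram and compute $\Delta_K$ by Fox calculus.) For the genus, the elementary bound $2g(K)\ge\deg\Delta_K$ already gives $g(K)\ge 1$; for the reverse inequality I would use the Eisenbud--Neumann computation of the Thurston norm from the splice diagram \cite{EN}, which yields $\|\phi\|_T=1$ for the generator $\phi$ of $H^1(\Sigma\setminus K;\mathbb{Z})$, and since $\|\phi\|_T=2g(K)-1$ for a nontrivial knot this forces $g(K)=1$. (Equivalently, one can plumb together the fiber surfaces of the torus-knot pieces appearing in the splice diagram to exhibit an explicit genus-one Seifert surface.) As a consistency check, Theorem~\ref{mcm} with $b_1(\Sigma\setminus K)=1$, $b_3=0$ and $\mathrm{div}(\phi)=1$ gives $\deg\Delta_K\le\|\phi\|_T+1=2$, with equality.

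\textbf{Non-fiberedness.} This is the crux, and it is exactly where the ordinary Alexander polynomial fails to help: $\Delta_K=t^2-t+1$ is monic and, by the computation above, equality holds in the degree bound of Theorem~\ref{mcm}, so the (false) converse of that theorem would predict that $K$ is fibered --- indeed $K$ has the Alexander polynomial, the genus, and the Thurston norm of the fibered trefoil. To genuinely obstruct fiberedness I would pass to twisted Alexander polynomials. Concretely: choose a representation $\alpha\colon\pi_1(\Sigma\setminus K)\to GL(n,\mathbb{F})$ with finite image, chosen so that it is visible from the Seifert-fibered pieces of the splice diagram and therefore detects that $\Sigma\ne S^3$; compute the associated twisted Alexander polynomial $\Delta^{\alpha}_{K,\phi}$ in the conventions of \cite{FK}; and show that it is either non-monic or violates the twisted analogue of the degree equality of Theorem~\ref{mcm}. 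Either outcome contradicts $K$ being fibered. The main obstacle is choosing a representation $\alpha$ that simultaneously sees the non-$S^3$ summand and keeps the twisted computation manageable: too large and the computation becomes intractable, too small and it detects nothing. As a route that sidesteps twisting altogether, one can instead apply the Eisenbud--Neumann combinatorial fibration criterion for graph links \cite{EN} directly to the splice diagram of $K$ and verify that $K$ fails it.
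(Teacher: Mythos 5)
Your treatment of the Alexander polynomial and the genus is essentially the paper's: it applies Theorem 12.1 of \cite{EN} to the splice diagram to get $\Delta_K=\frac{(t-1)(t^6-1)}{(t^3-1)(t^2-1)}=t^2-t+1$, and Theorem 11.1 of \cite{EN} to get $\|\phi\|_T=\sum_{j=2}^{8}(\delta_j-2)|l_{1j}|=1$, whence $g=1$ from $\|\phi\|_T=2g-1$ (your extra lower bound $2g\ge\deg\Delta_K$ is harmless but unnecessary). The divergence is in the non-fiberedness step. The paper's entire argument there is the observation you relegate to your final sentence: the Eisenbud--Neumann fibration criterion (Theorem 11.2 of \cite{EN}) fails because some of the terms $(\delta_j-2)|l_{1j}|$ in the Thurston-norm sum vanish (here $l_{12}=l_{13}=l_{14}=l_{15}=0$), so $K$ is not fibered. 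The twisted-Alexander route you foreground is not a proof as written: you never exhibit a representation $\alpha$, and you yourself identify finding one as the main obstacle. In the paper such a representation is only produced later (Theorem \ref{mainthm}), with computer assistance from Knottwister, and the paper's logic needs non-fiberedness established combinatorially in Proposition \ref{calcsfork} precisely so that the subsequent twisted computation can be read as the twisted invariants \emph{detecting} an already-known failure of fiberedness, rather than as the source of that knowledge. So promote your last sentence to be the proof and drop the twisted strategy from this proposition; with that inversion your argument matches the paper.
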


 \vskip-.45in
\setlength{\unitlength}{0.4cm}
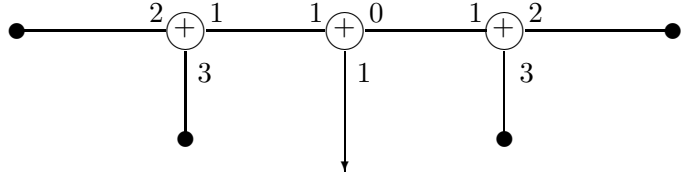
\begin{figure}[h!]
 $$\begin{picture}(23,3)
 \put(0,0){\line(1,0){4.95}}
 \put(0,0){\circle*{0.5}}
 \put(5.6,0){\circle{1.2}}
 \put(5.6,-0.7){\line(0,-1){3}}
 \put(5.6,-3.6){\circle*{0.5}}
 \put(6.3,0){\line(1,0){3.9}}
 \put(10.9,0){\circle{1.2}}
 \put(10.9,-0.7){\vector(0,-1){4}}
 \put(11.6,0){\line(1,0){4}}
 \put(16.2,0){\circle{1.2}}
 \put(16.2,-0.7){\line(0,-1){3}}
 \put(16.2,-3.6){\circle*{0.5}}
 \put(16.9,0){\line(1,0){5}}
 \put(21.8,0){\circle*{0.5}}
 \put(5.2,-0.15){$+$}
 \put(4.4,0.3){$2$}
 \put(6.4,0.3){$1$}
 \put(6,-1.7){$3$}
 \put(10.5,-0.15){$+$}
 \put(9.7,0.3){$1$}
 \put(11.7,0.3){$0$}
 \put(11.3,-1.7){$1$}
 \put(15.8,-0.15){$+$}
 \put(15,0.3){$1$}
 \put(17,0.3){$2$}
 \put(16.7,-1.7){$3$}
\end{picture}$$\vskip.5in
\caption{Splice Diagram of the Knot $K$}
\label{knotk}
\end{figure}

S. Friedl and T. Kim have generalized the result in Theorem
\ref{mcm} by considering the collection of twisted Alexander
polynomials in the following theorem.

\begin{theorem} \label{FK}

\textnormal{(Friedl-Kim, \cite{FK})} Let $N$ be a 3-manifold
different from $S^1 \times D^2$ and $S^1 \times S^2$. Let $\phi
\in H^1(N; \mathbb{Z})$ be such that $(N,\phi)$
fibers over $S^1$. Then for every representation $\alpha:
\pi_1(N) \rightarrow Gl_k(\mathbb{Z})$,
\begin{equation}\label{eqn} \Delta_{N,\phi}^\alpha \textnormal{ is monic
and }deg(\Delta_{N,\phi}^\alpha) = k\|\phi\|_T+ deg(\Delta
_{N,\phi,0})+ deg(\Delta_{N,\phi,2}). \end{equation}

\noindent Here, $\Delta _{N,\phi,0}$ and $\Delta _{N,\phi,2}$ are
determined by the Alexander modules $H_0(N;\mathbb{Z}^k[F])$
and $H_2(N;\mathbb{Z}^k[F])$.

\end{theorem}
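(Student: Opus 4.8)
The plan is to use the fibered hypothesis to reduce every twisted Alexander module of $N$ to a twisted homology group of the fiber, where the whole computation becomes finite-dimensional linear algebra driven by the monodromy. Throughout I write $R=\mathbb{Z}[F]=\mathbb{Z}[t^{\pm1}]$ and let $\pi_1(N)$ act on $\mathbb{Z}^k\otimes_\mathbb{Z} R$ through $\alpha\otimes\phi$, so that $H_i(N;\mathbb{Z}^k[F])$ is an $R$-module and $\Delta^\alpha_{N,\phi,i}$ is (up to a unit) its order; I take $\Delta^\alpha_{N,\phi}=\Delta^\alpha_{N,\phi,1}$. First I would invoke Stallings' fibration theorem: since $(N,\phi)$ fibers over $S^1$, the manifold $N$ is the mapping torus of a homeomorphism $\psi\colon\Sigma\to\Sigma$ of the fiber surface $\Sigma$, with $\pi_1(\Sigma)=\ker(\phi\colon\pi_1(N)\to\mathbb{Z})$. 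The exclusion of $S^1\times D^2$ and $S^1\times S^2$ guarantees that $\Sigma$ is neither a disk nor a sphere, so that $-\chi(\Sigma)\ge 0$ and $\Sigma$ is norm-minimizing for $\phi$. The mapping-torus decomposition then identifies the twisted chain complex $C_*(N;\mathbb{Z}^k[F])$ with the algebraic mapping cone of the chain map $t\cdot\psi_\#-\mathrm{id}$ acting on $C_*(\Sigma;\mathbb{Z}^k)\otimes_\mathbb{Z} R$; concretely this is the twisted Wang long exact sequence relating $H_*(N;\mathbb{Z}^k[F])$ to $H_*(\Sigma;\mathbb{Z}^k)$.

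The second step is to show that $t\psi_*-\mathrm{id}$ is injective on each $H_i(\Sigma;\mathbb{Z}^k)\otimes R$. Because $\psi$ is a homeomorphism, $\psi_*$ is a $\mathbb{Z}$-automorphism of the finitely generated group $H_i(\Sigma;\mathbb{Z}^k)$; hence $\det(t\psi_*-\mathrm{id})$ is a polynomial whose value at $t=0$ is $\pm1\ne0$, so the map is invertible over $\mathbb{Q}(t)$. Consequently each $H_i(N;\mathbb{Z}^k[F])$ is $R$-torsion and the Wang sequence collapses to isomorphisms $H_i(N;\mathbb{Z}^k[F])\cong\operatorname{coker}\!\big(t\psi_*-\mathrm{id}\mid H_i(\Sigma;\mathbb{Z}^k)\big)$. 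Therefore $\Delta^\alpha_{N,\phi,i}\doteq\det\!\big(t\psi_*-\mathrm{id}\mid H_i(\Sigma;\mathbb{Z}^k)\big)$ for $i=0,1,2$. Two consequences follow immediately: the leading coefficient of this determinant is $\det(\psi_*)=\pm1$, a unit, so every $\Delta^\alpha_{N,\phi,i}$, and in particular $\Delta^\alpha_{N,\phi}=\Delta^\alpha_{N,\phi,1}$, is monic; and $\deg\Delta^\alpha_{N,\phi,i}=\dim_\mathbb{Q}H_i(\Sigma;\mathbb{Q}^k)=:b_i$.

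It then remains to identify $b_1-b_0-b_2$ with $k\|\phi\|_T$. Since the twisted Euler characteristic is insensitive to the local system, $b_0-b_1+b_2=\chi(\Sigma;\mathbb{Q}^k)=k\,\chi(\Sigma)$, and for a fibered class the Thurston norm equals $\|\phi\|_T=-\chi(\Sigma)$. Combining these gives $\deg\Delta^\alpha_{N,\phi}-\deg\Delta^\alpha_{N,\phi,0}-\deg\Delta^\alpha_{N,\phi,2}=b_1-b_0-b_2=-k\,\chi(\Sigma)=k\|\phi\|_T$, which together with monicity is exactly \eqref{eqn}.

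The step I expect to be the main obstacle is the careful bookkeeping over $\mathbb{Z}$ rather than over a field: the groups $H_i(\Sigma;\mathbb{Z}^k)$ may carry $\mathbb{Z}$-torsion, so making the notions ``order'', ``monic'', and ``degree'' simultaneously precise is cleanest through Reidemeister--Turaev torsion, where $\tau(N,\alpha\otimes\phi)\doteq\Delta^\alpha_{N,\phi,1}/(\Delta^\alpha_{N,\phi,0}\,\Delta^\alpha_{N,\phi,2})$, so that the determinant computation above controls the torsion rather than each order individually. One must also track the divisibility $\operatorname{div}(\phi)$ when $\phi$ is non-primitive, and verify directly that the two excluded manifolds are precisely the cases with a disk or sphere fiber, where the Thurston norm formula and the monicity argument would otherwise degenerate.
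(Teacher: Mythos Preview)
The paper does not contain a proof of this statement; Theorem~\ref{FK} is quoted from Friedl and Kim \cite{FK} and used only as a black box for the applications that follow. Your outline---mapping-torus description via Stallings, the Wang sequence identifying $H_i(N;\mathbb{Z}^k[F])$ with $\operatorname{coker}(t\psi_*-\mathrm{id})$ on $H_i(\Sigma;\mathbb{Z}^k)$, monicity from $\det\psi_*=\pm1$, and the degree identity from $k\,\chi(\Sigma)=-k\|\phi\|_T$---is essentially the argument given in the original Friedl--Kim paper, so there is nothing in the present paper to compare against and your sketch is correct at the level of detail you give.
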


Theorem \ref{FK} leads one to believe that the collection of
twisted Alexander polynomials gives stronger obstructions to
fiberedness. This, in fact is confirmed by the following
theorem of S. Friedl and S. Vidussi.

\begin{theorem}\label{FV}

\textnormal{(Friedl-Vidussi, \cite{FV}) } Let $N$ be a compact,
connected, orientable 3-manifold whose boundary (if any) is a
union of tori. Let $\phi$ be non-trivial in
$H^1(N;\mathbb{Z})$.
 Then if $\phi$ is not fibered, there is a
representation $\alpha: \pi_1(N) \to Gl_k(\mathbb{Z})$ for
which the conditions in \eqref{eqn} are not satisfied.

\end{theorem}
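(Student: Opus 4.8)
The plan is to reduce Theorem~\ref{FV} to a statement about ordinary Alexander polynomials in finite covers, and then to build the offending cover geometrically. After replacing $\phi$ by the primitive class in its ray (this affects neither fiberedness nor the existence of a bad representation), I would restrict attention to the representations that factor through a finite quotient of $\pi:=\pi_1(N)$. Given a finite-index subgroup $H\le\pi$ with associated cover $p\colon\tilde N\to N$, let $\alpha_H\colon\pi\to GL_{[\pi:H]}(\mathbb Z)$ be the permutation representation on $\mathbb Z[\pi/H]$, and write $\tilde\phi:=p^*\phi$. By Shapiro's lemma the $\alpha_H$-twisted Alexander modules of $(N,\phi)$ are canonically the ordinary Alexander modules of $(\tilde N,\tilde\phi)$, so $\Delta^{\alpha_H}_{N,\phi}\doteq\Delta_{\tilde N,\tilde\phi}$ and the correction terms $\Delta^{\alpha_H}_{N,\phi,i}\doteq\Delta_{\tilde N,\tilde\phi,i}$ match as well. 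Consequently \eqref{eqn} holds for $(N,\phi,\alpha_H)$ exactly when $\Delta_{\tilde N,\tilde\phi}$ is monic and its degree realizes the equality of Theorems~\ref{mcm} and~\ref{FK} for the cover. So it is enough to prove:

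$(\star)$ if $(N,\phi)$ is not fibered, then some finite cover $p\colon\tilde N\to N$ has $\Delta_{\tilde N,\tilde\phi}$ non-monic, or of degree strictly below McMullen's bound.

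Next I would set up the geometry behind $(\star)$. By Stallings' fibration theorem, $(N,\phi)$ not fibered means $K:=\ker(\phi\colon\pi\to\mathbb Z)$ is not finitely generated; equivalently, cutting $N$ along a Thurston-norm-minimizing surface $S$ dual to $\phi$ produces a (sutured) manifold $M$ that is not the product $S\times[0,1]$, i.e.\ one of the two copies $i_\pm\colon\pi_1(S)\hookrightarrow\pi_1(M)$ of $S$ in $\partial M$ is injective but not surjective. Note every finite cover $(\tilde N,\tilde\phi)$ is automatically still non-fibered, since $\ker\tilde\phi$ is a finite-index subgroup of the non-finitely-generated group $K$, hence non-finitely-generated; but non-fiberedness by itself is not yet homologically visible (there are non-fibered classes whose Alexander polynomial is monic of the largest possible degree), which is precisely why a cover is needed. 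The aim of $(\star)$ is to pass to a cover in which the failure of surjectivity of $i_+$ is promoted to a failure at the level of rational, or $\mathbb F_p$-, homology: one wants $H\le\pi$ of finite index so that in the corresponding cut-open piece $\tilde M$ the boundary surface $\tilde S_+$ has $H_1(\tilde S_+;\mathbb Q)\to H_1(\tilde M;\mathbb Q)$ with nonzero cokernel. Plugging this into the Wang / Mayer--Vietoris sequence for the infinite cyclic cover $\tilde N_{\tilde\phi}=\bigcup_i\tilde M_i$ then forces the Alexander norm of $(\tilde N,\tilde\phi)$ strictly below its Thurston norm, so McMullen's bound is not attained; if instead one only gains $\mathbb F_p$-homology, one gets non-monicity of $\Delta_{\tilde N,\tilde\phi}$ modulo $p$, and hence of $\Delta_{\tilde N,\tilde\phi}$ itself.

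The hard part will be realizing this promotion, and it is where deep $3$-manifold topology must enter. I would want the image $i_+(\pi_1 S)<\pi_1(M)$ — more precisely a finite chain of subgroups interpolating between it and $\pi_1(M)$ — to be separable in the profinite topology on $\pi_1(M)$, so that a finite quotient of $\pi_1(M)$ detects an element outside $i_+(\pi_1 S)$ and, after one further cover, detects it homologically; this finite quotient then has to be extended across the HNN presentation $\pi=\pi_1(M)\ast_S$, using residual finiteness of $\pi$ (available for $3$-manifold groups via geometrization), to produce the actual cover $\tilde N\to N$. Executing this means working along the JSJ and geometric decomposition of $N$: the separability and homology-lifting are routine for the Seifert-fibered pieces, rely on the strong residual and separability properties of hyperbolic $3$-manifold groups for the hyperbolic pieces, and must be glued compatibly along the JSJ tori. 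I expect this separability-and-assembly argument to be the genuine obstacle; by contrast the reduction in the first paragraph (Shapiro's lemma), the input from Stallings, and the Thurston-versus-Alexander-norm bookkeeping coming out of Theorem~\ref{mcm} are comparatively formal — and Theorem~\ref{FK} already supplies the converse implication, so the upshot is a sharp characterization of fiberedness by twisted Alexander polynomials.
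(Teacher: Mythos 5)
The paper contains no proof of Theorem~\ref{FV}: it is quoted verbatim from Friedl--Vidussi \cite{FV}, and the author only remarks that the proof ``is not constructive'' before turning to explicit examples. So there is nothing in-paper to compare your argument against; it must stand on its own, and as written it is a roadmap rather than a proof. To your credit, the roadmap matches the actual strategy of \cite{FV}: restrict to representations induced from finite-index subgroups, use Shapiro's lemma to identify the twisted Alexander modules of $(N,\phi)$ with the ordinary Alexander modules of the cover, use Stallings' fibration criterion to translate non-fiberedness into the failure of $i_\pm\colon\pi_1(S)\to\pi_1(M)$ to be an isomorphism for a norm-minimizing surface $S$, and then seek a finite cover in which that failure becomes visible in first homology, which via Theorem~\ref{mcm} violates \eqref{eqn}. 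Two bookkeeping points in your reduction need care: $p^*\phi$ is generally not primitive, so the divisibility correction in Theorem~\ref{mcm} must be tracked when comparing degrees in the cover; and Stallings' criterion carries irreducibility-type hypotheses that have to be arranged first (prime decomposition, the excluded cases $S^1\times D^2$ and $S^1\times S^2$, etc.).

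The genuine gap is precisely the step you flag as the ``genuine obstacle'' and then do not supply: producing a finite cover of the cut-open sutured manifold $M$ in which $b_1(\tilde S_+)\neq b_1(\tilde M)$, compatibly on both sides of the splitting, and extending it across the HNN decomposition of $\pi_1(N)$. This is not a corollary of known separability facts; it is the entire technical content of the seventy-page paper \cite{FV} (and of its strengthening in \cite{FV2}). Separability of $i_+(\pi_1 S)$ in $\pi_1(M)$ only yields a finite quotient in which some element is seen to lie outside the subgroup; promoting that to a homological discrepancy in a finite cover is a further, nontrivial step. Worse, the separability input itself is not ``routine for the Seifert-fibered pieces'': fundamental groups of graph manifolds are not LERF in general, which is exactly why Friedl and Vidussi must run a delicate induction over a sutured hierarchy and the JSJ decomposition rather than quote subgroup separability off the shelf. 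Until that entire step is supplied, your proposal establishes only the formal reduction $(\star)$, not the theorem.
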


For knots of genus 1, this result has been enhanced to show that,
there is some representation $\alpha$ for which the twisted
Alexander polynomial vanishes, \cite{V}. This result has been further generalized to any 3-manifold pair, $(N,\phi)$, where $\phi\in H^1(N; \mathbb{Z})$, \cite{FV2}.

The proof of Theorem \ref{FV} is not constructive. We have found
explicit representations for the knot $K$ and one 2-component
link containing $K$, for which \eqref{eqn} is violated.

\begin{theorem} For the representation $\alpha:\pi_1(K) \rightarrow S_5
\rightarrow GL_5(\mathbb{Z})$ given in Theorem \ref{mainthm}, 
$\Delta_{K,\phi}^\alpha$ is not monic.
\end{theorem}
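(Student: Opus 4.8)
The plan is to exhibit the promised representation $\alpha$ explicitly and then compute $\Delta_{K,\phi}^\alpha$ directly from a presentation of $\pi_1(K)$ via Fox calculus, showing its leading (and/or trailing) coefficient is not a unit in $\mathbb{Z}$. First I would obtain a Wirtinger-style presentation of $\pi_1(\Sigma \setminus K)$ from the splice diagram in Figure~\ref{knotk}; since $K$ is a graph knot, its exterior is glued from Seifert-fibered pieces along tori, and one can read off generators and relations (one meridian class mapping to $\phi$, the rest mapping to $0 \in H^1(K;\mathbb{Z})$). Next, using Proposition~\ref{calcsfork}, I have $\|\phi\|_T = 2g(K) - 1 = 1$, and I can compute $\deg(\Delta_{K,\phi,0})$ and $\deg(\Delta_{K,\phi,2})$ for the specific rank-$5$ representation — for a knot exterior $\Delta_{K,\phi,2}$ is typically trivial and $\Delta_{K,\phi,0}$ is controlled by the order of $H_0$ of the twisted chain complex, i.e. by whether $\alpha(\phi\text{-preimages}) - \mathrm{Id}$ is invertible. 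So the target equality from \eqref{eqn} predicts a specific degree and monic leading term, and I must show the actual twisted Alexander polynomial fails to be monic.

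The computational core is as follows. I would take the factorization $\alpha: \pi_1(K) \to S_5 \to GL_5(\mathbb{Z})$ through the permutation representation, pick the homomorphism to $S_5$ so that it is compatible with the splice-diagram structure (this is the content of the referenced Theorem~\ref{mainthm}, which I am allowed to cite), and then form the twisted chain complex of the universal/infinite-cyclic cover. Concretely, from the deficiency-one presentation $\langle x_1,\dots,x_n \mid r_1,\dots,r_{n-1}\rangle$, I build the $(n-1)\times n$ matrix of Fox derivatives $\partial r_i/\partial x_j$, apply the ring homomorphism $\mathbb{Z}[\pi_1(K)] \to M_5(\mathbb{Z}[t^{\pm 1}])$ induced by $g \mapsto t^{\phi(g)}\alpha(g)$, delete one column (say the $x_\ell$ column), take the determinant of the resulting $5(n-1)\times 5(n-1)$ matrix, and divide by $\det(t^{\phi(x_\ell)}\alpha(x_\ell) - \mathrm{Id})$ to get $\Delta_{K,\phi}^\alpha$ up to units. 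Then I simply inspect its leading coefficient.

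The main obstacle will be the bookkeeping: producing a correct and tractable presentation of $\pi_1(\Sigma \setminus K)$ from the splice diagram and keeping the $5\times 5$ block matrix determinant manageable. Two points need care. First, I must verify that the chosen map to $S_5$ actually extends over $\pi_1(K)$ — i.e. respects all the Seifert relations at each splice node — which is exactly where the structure of a \emph{graph} knot in a non-$S^3$ homology sphere is used, and where a poorly chosen representation would give nothing. Second, I have to make sure the representation is "$\phi$-admissible" in the sense needed so that $\Delta_{K,\phi}^\alpha$ is well-defined and comparison with \eqref{eqn} is legitimate; in particular I should check the relevant correction terms $\deg(\Delta_{K,\phi,0})$, $\deg(\Delta_{K,\phi,2})$ so that the \emph{monic} conclusion (rather than a degree mismatch) is the clean obstruction. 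Once the matrix is written down, the non-monicness should fall out of a single determinant computation: some prime $p$ divides the top coefficient, so $(K,\phi)$ cannot fiber, consistently with — and giving an independent twisted-coefficient proof of — Proposition~\ref{calcsfork}.
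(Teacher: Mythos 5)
Your proposal follows essentially the same route as the paper: the presentation of $\pi_1(K)$ is obtained exactly as you describe, from the splice decomposition via Seifert--Van Kampen (Lemma \ref{pi1k}; note that Wirtinger presentations are used only for the individual pieces, since $K\subset\Sigma\neq S^3$ admits no global blackboard projection), and the twisted Alexander polynomial of the given representation is then evaluated by the standard Fox-calculus construction, which the paper delegates to the program Knottwister. The one expectation you should adjust is the endgame: rather than a nonzero integral polynomial whose top coefficient is divisible by some prime, the computation returns $\Delta_{K,\phi}^\alpha=0$ over $\mathbb{F}_p$ for $p=5,7,\dots,29$, and the paper concludes non-monicness directly from this vanishing (the zero polynomial is not monic), consistent with the genus-one vanishing theorem of \cite{V}.
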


In order to find the explicit representation, we will first
calculate the fundamental group of the exterior of $K$, and
then use the computer program Knottwister written by S.
Friedl,\cite{KT}.\\

\section{Proof of Proposition \ref{calcsfork}}

To prove the proposition, we use various results from \cite{EN}. (More details can be found in \cite{diss}.)\\

\begin{proof} As we can see in the diagram in Figure 2, there is
one arrowhead vertex, we will call this vertex $v_1$.
Considering the conventions in \cite{EN}, this knot has $8$
vertices. So $n=1$, and $k=8$. First, we will find $l_{ij}$ for
$i=1$, and $1<j\leq8$ : $l_{12}= l_{13}= l_{14}=l_{15}=0,
l_{16}=6, l_{17}=3, l_{18}=2.$ \\

\setlength{\unitlength}{0.35cm}
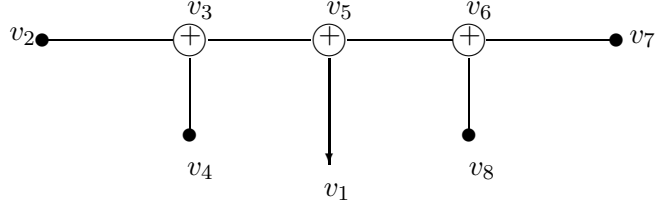
\begin{figure}
 $$\begin{picture}(23,3)
 \put(0,0){\line(1,0){4.95}}
 \put(0,0){\circle*{0.5}}
 \put(-1.25,0){$v_2$}
 \put(5.6,0){\circle{1.2}}
 \put(5.5,1){$v_3$}
 \put(5.6,-0.7){\line(0,-1){3}}
 \put(5.6,-3.6){\circle*{0.5}}
 \put(5.5,-5.1){$v_4$}
 \put(6.3,0){\line(1,0){3.9}}
 \put(10.9,0){\circle{1.2}}
 \put(10.8,1){$v_5$}
 \put(10.9,-0.7){\vector(0,-1){4}}
 \put(10.7,-5.9){$v_1$}
 \put(11.6,0){\line(1,0){4}}
 \put(16.2,0){\circle{1.2}}
 \put(16.1,1){$v_6$}
 \put(16.2,-0.7){\line(0,-1){3}}
 \put(16.2,-3.6){\circle*{0.5}}
 \put(16.2,-5.1){$v_8$}
 \put(16.9,0){\line(1,0){5}}
 \put(21.8,0){\circle*{0.5}}
 \put(22.3,-0.1){$v_7$}
 \put(5.2,-0.11){$+$}

 \put(10.5,-0.11){$+$}
 \put(15.8,-0.11){$+$}

\end{picture}$$\vskip.65in
\caption{Vertices of the Knot $K$} \label{kwithvertices}
\end{figure}

 For
boundary vertices and arrowhead vertices, $\delta_i=1$. For
this particular knot, each node has $3$ arrowhead vertices
and/or boundary vertices attached to it. So we have the following values for $\delta_i$ where $1<i\leq 8$:\\
$\delta_2=\delta_4=\delta_7=\delta_8 =1$ and
$\delta_3=\delta_5=\delta_6 =3$. Now we use Theorem 12.1 in \cite{EN} to compute the Alexander polynomial:
$$\Delta=(t-1)(t^0-1)^{-1}(t^0-1)(t^0-1)^{-1}(t^0-1)(t^6-1)(t^3-1)^{-1}(t^2-1)^{-1}.$$

\noindent Following the convention mentioned in \cite{EN} we cancel the terms $(t^0-1)$ and
$(t^0-1)^{-1}$. Doing so we get
$$\Delta=\frac{(t-1)(t^6-1)}{(t^3-1)(t^2-1)}=\frac{t^3+1}{t+1}=t^2-t+1.$$

To find the genus of the knot, we calculate the Thurston norm
of the class $\phi=(1)\in H^1(\Sigma
\setminus(\nu(K),\mathbb{Z})\simeq \mathbb{Z}$. By Theorem 11.1 in
\cite{EN},

$$\|\phi\|_T = \|(1)\|_T=\sum_{j=2}^8(\delta_j-2)|l_{1j}|=1.$$

So this knot has genus equal to $1$ as claimed since
$\|\phi\|_T=2g-1$. It remains to show it is not fibered. To
show this, we use Theorem 11.2 in \cite{EN}, which assets that if some of
the terms in the summation are zero, as in our case, then $K$ is not fibered.  \end{proof}

\section{Proof of the Main Theorem}
\subsection{The Fundamental Group}

To find the explicit representation $\alpha$, we first need to
calculate the fundamental group of its exterior. For a knot in $S^3$, one can
use the Wirtinger presentation of any blackboard projection of
the knot to compute its fundamental group.  Given that the knot $K$ is contained in a homology sphere
$\Sigma$, this method is not directly available, because we do not
have access to any blackboard presentation. The route we will
follow uses instead the Seifert-Van Kampen theorem and the
decomposition of the knot exterior into three components
reflected by the splice diagram of $K$ given in Figure 1.

From now on, for the sake of simplicity, when we talk about the
fundamental group of the exterior of a link or a knot $L$, we
will call it the fundamental group of $L$. We will follow
this convention in our notation as well. For example, we will
denote the fundamental group of the exterior of the knot $K$ as
$\pi_1(K)$ instead of $\pi_1(\Sigma \setminus (\nu(K)))$.

\begin{lemma}\label{pi1k} The exterior of the knot $K$ has the following fundamental
group:

$\pi_1(K)=\langle x,y,s,t,b |xyx=yxy,
stbst=bstb,\\
 \indent xs=sx, xt=tx,
 s=x^{-1}yx^2yx^{-3}, x=(st)^{-1}b(st)^2b(st)^{-3}\rangle.$\\
 \end{lemma}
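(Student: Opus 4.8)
The plan is to reconstruct $\pi_1(K)$ from the splice decomposition of the exterior $\Sigma\setminus\nu(K)$ displayed in Figures~\ref{knotk} and~\ref{kwithvertices}, together with the Seifert--Van Kampen theorem. By Eisenbud--Neumann \cite{EN}, the two edges of the splice diagram joining the node vertices $v_3$, $v_5$, $v_6$ are realized by two disjoint incompressible tori $T_{35}$, $T_{56}$ that cut $\Sigma\setminus\nu(K)$ into three Seifert fibered pieces $M_3$, $M_5$, $M_6$, one per node. First I would identify each piece from its weights. At $v_3$ the weights are $2,1,3$; since the edge toward $v_5$ carries weight $1$, the ambient Seifert homology sphere is $S^3$ fibered with two exceptional fibers of orders $2$ and $3$, so removing the regular fiber facing $v_5$ presents $M_3$ as the exterior of the $(2,3)$--torus knot: $\pi_1(M_3)=\langle x,y\mid xyx=yxy\rangle$, with $x$ a meridian and the regular fiber represented by the central word $(xyx)^2$. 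The vertex $v_6$ (weights $1,2,3$) is handled in the same way and gives a second trefoil exterior $\pi_1(M_6)=\langle p,b\mid pbp=bpb\rangle$. The middle vertex $v_5$, whose weights $1,0,1$ include the arrowhead $v_1=K$, gives the ``splicing'' piece: a circle bundle over a pair of pants, hence $\pi_1(M_5)=\langle s,t,f\mid [f,s]=[f,t]=1\rangle\cong\mathbb{Z}\times F_2$, where $f$ is the central regular fiber and one can choose $s$, $t$ so that the cores of $T_{35}$ and $T_{56}$ are $s$ and $st$ respectively.

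Next I would record the peripheral pairs and apply Seifert--Van Kampen across each torus. On $T_{35}$, $M_3$ contributes the meridian $\mu_3=x$ and the Seifert longitude $\ell_3$, which up to conjugacy in the trefoil group is the word $x^{-1}yx^{2}yx^{-3}$; $M_5$ contributes the core $s$ and the fiber $f$. Eisenbud--Neumann's splicing rule along the weight-$(1,1)$ edge identifies $\mu_3$ with $f$ and $\ell_3$ with $s$, i.e. it imposes $x=f$ and $s=x^{-1}yx^{2}yx^{-3}$. On $T_{56}$, $M_6$ contributes the meridian $\mu_6=p$ and longitude $\ell_6=p^{-1}bp^{2}bp^{-3}$ (up to conjugacy), while $M_5$ contributes the core $st$ and the fiber $f$; here the splicing rule along the weight-$(0,1)$ edge takes $\mu_6$ to $st$ and $\ell_6$ to $f$, imposing $p=st$ and $f=(st)^{-1}b(st)^{2}b(st)^{-3}$, and turning the trefoil relation $pbp=bpb$ into $stbst=bstb$. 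Amalgamating $\pi_1(M_3)$, $\pi_1(M_5)$, $\pi_1(M_6)$ over $\pi_1(T_{35})$ and $\pi_1(T_{56})$ with these identifications, and then performing the evident Tietze moves --- eliminate $f$ using $f=x$ (so that $[f,s]$, $[f,t]$ become $xs=sx$, $xt=tx$) and eliminate $p$ using $p=st$ --- yields exactly the presentation in the statement.

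The conceptual outline above is straightforward; the step I expect to be delicate is pinning down the peripheral words exactly. Matching each Seifert piece to its splice weights is mechanical once conventions are aligned with \cite{EN}, but writing the two trefoil longitudes as honest words in $x,y$ (respectively $p,b$), and --- above all --- tracking which of meridian, longitude, and fiber is glued to which after the splicing map is twisted by the edge weights ($1,1$ on the $v_3v_5$ edge versus $0,1$ on the $v_5v_6$ edge), requires careful control of orientations, framings, and the base-point paths used in the van Kampen argument. To catch sign errors I would cross-check the resulting group by confirming that it abelianizes to $\mathbb{Z}$, as the exterior of a knot in a homology sphere must, and that a Fox-calculus computation of its Alexander polynomial from this presentation recovers $t^{2}-t+1$, in agreement with Proposition~\ref{calcsfork}.
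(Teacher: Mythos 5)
Your proposal is correct and follows essentially the same route as the paper: decompose the exterior along the two splice tori into three pieces, present each piece (a trefoil group $\langle x,y\mid xyx=yxy\rangle$ on each end and $\mathbb{Z}\times F_2$ in the middle), glue by Seifert--Van Kampen with the meridian--longitude swap dictated by splicing, and eliminate the redundant generators by Tietze moves. The only difference is cosmetic --- the paper realizes the middle piece as the exterior of a $3$-component necklace link and computes all three pieces via Wirtinger presentations of explicit projections, whereas you read them off as Seifert fibered pieces --- but the resulting presentations and splice identifications ($x=n=f$, $s=x^{-1}yx^{2}yx^{-3}$, $a=p=st$, $n=a^{-1}ba^{2}ba^{-3}$) coincide exactly.
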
 \vskip-.2in

\begin{proof} First, we will look at the three building blocks of the
splice diagram. If we separate the middle node from the rest,
we get the following splice diagram.\vskip-.2in

\setlength{\unitlength}{0.4cm}

\begin{figure}[h!]
 $$\begin{picture}(23,3)
\put(10.9,0){\circle{1.2}}
 \put(10.9,-0.7){\vector(0,-1){2.5}}
 \put(11.6,0){\vector(1,0){3}}
\put(10.2,0){\vector(-1,0){3}}
 \put(11.7,0.3){$0$}
 \put(11.3,-1.7){$1$}
 \put(10.5,-.15){$+$}
 \put(9.8,0.3){$1$}
\end{picture}$$\vskip.25in
\caption{Splice Diagram of the 3-Component Necklace}
\end{figure}
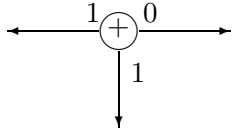

The three-component necklace that this splice diagram
represents is the one in Figure \ref{3compnl}. The arrowhead
vertex with weight $0$ is the main loop, and the ones with
weight $1$ are the two hanging loops. We will call the main
loop $N_0$, the loop hanging on the left $N_1$ and the one
hanging on the right $N_2$. The following is its projection.

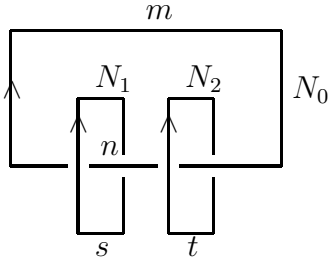
\begin{figure}[h!]
\setlength{\unitlength}{0.15cm}
$$\begin{picture}(23,1.5)
\thicklines
 \put(0,0){\line(1,0){24}}
 \put(12,0){\line(-2,-1){.75}}
 \put(12,0){\line(-2,1){.75}}
 \put(0,0){\line(0,-1){12}}
 \put(24,0){\line(0,-1){12}}
 \put(0,-12){\line(1,0){5}}
 \put(7,-12){\line(1,0){6}}
 \put(15,-12){\line(1,0){9}}
 \put(11,-12){\line(2,-1){.75}}
 \put(11,-12){\line(2,1){.75}}
 \put(6,-18){\line(0,1){12}}
 \put(6,-6){\line(1,0){4}}
 \put(6,-18){\line(1,0){4}}
 \put(10,-18){\line(0,1){5}}
 \put(10,-6){\line(0,-1){5}}
 \put(6,-8){\line(-1,-2){.4}}
 \put(6,-8){\line(1,-2){.4}}
 \put(14,-18){\line(0,1){12}}
 \put(14,-6){\line(1,0){4}}
 \put(14,-18){\line(1,0){4}}
 \put(18,-18){\line(0,1){5}}
 \put(18,-6){\line(0,-1){5}}
 \put(14,-8){\line(-1,-2){.4}}
 \put(14,-8){\line(1,-2){.4}}
 \put(-.7,-6){$\wedge$}
 \put(5.2,-9){$\wedge$}
 \put(13.2,-9){$\wedge$}
 \put(12,1){\large$m$}
 \put(15.7,-20){\large$t$}
 \put(7.5,-20){\large$s$}
 \put(8,-11){\large$n$}
 \put(25,-6){\large$N_0$}
 \put(15.5,-5){\large$N_2$}
 \put(7.5,-5){\large$N_1$}
\end{picture}$$ \vskip1in
\caption{$3$-Component Necklace}\label{3compnl}
\end{figure}

 To avoid making the
diagrams busy, we put the names of the meridians on the arc and
will not include the actual meridians in pictures.
 For this necklace, let $\mu(N_1)=s$, and $\mu(N_2)=t$ be the
meridians of $N_1$ and $N_2$. Also since $N_0$ is made of two
arcs $m$ and $n$, we can choose as meridian of this component
either $m$ or $n$. Using the Wirtinger presentation for links,
we see that the (simplified) fundamental group of this link is
$\pi_1(N)=\langle n,s,t| ns=sn, nt=tn\rangle.$\\

The node on the left is the $(2,3)$ cable on the unknot, as we can read from its splice diagram (Proposition 7.3 in \cite{EN}). Hence it represents the right-handed trefoil knot
with the canonical orientation.
 We will call it $T_L$. The
diagram in Figure 5 shows the node on the left separated from the rest.

\setlength{\unitlength}{0.35cm}
\hskip-1in
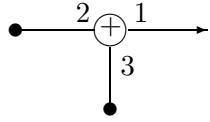
\begin{figure}
 $$\begin{picture}(11,3)
 \put(0.5,0){\line(1,0){3}}
 \put(0.5,0){\circle*{0.5}}
 \put(4.1,0){\circle{1.2}}
 \put(4.1,-0.65){\line(0,-1){2.1}}
 \put(4.1,-3){\circle*{0.5}}
 \put(4.8,0){\vector(1,0){3}}
\put(3.7,-0.16){$+$}
 \put(2.8,0.3){$2$}
 \put(5,0.3){$1$}
 \put(4.5,-1.7){$3$}
\end{picture}$$\vskip.2in
\caption{Splice Diagram of the Trefoil on the Left}
\end{figure}

Considering the projection of the right-handed trefoil shown in
Figure 6, we can use the Wirtinger presentation for knots to
calculate the fundamental group. Doing so will give us the
following (simplified) fundamental group: $\pi_1(T_L)= \langle
x,y | xyx=yxy\rangle.$ For this knot, we will choose the
meridian to be $\mu(T_L)=x$. Then by the details discussed in
Remark 3.13 of \cite{B},
 the longitude will be $\lambda(T_L)= zxyx^{-3}=x^{-1}yx^2yx^{-3}$.

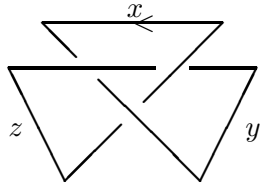
\begin{figure}[h!]

\setlength{\unitlength}{0.15cm}
$$\begin{picture}(23,1.5)
\thicklines
 \put(4,0){\line(1,0){16}}
 \put(4,0){\line(1,-1){3}}
 \put(9,-5){\line(1,-1){9}}
 \put(20,0){\line(-1,-1){7}}
 \put(11,-9){\line(-1,-1){5}}
 \put(6,-14){\line(-1,2){5}}
 \put(18,-14){\line(1,2){5}}
 \put(1,-4){\line(1,0){13}}
 \put(17,-4){\line(1,0){6}}
 \put(12,-.7){$<$}
 \put(11.5,.5){$x$}
 \put(22,-10){$y$}
 \put(1,-10){$z$}
\end{picture}$$

 \vskip.75in
\caption{The Trefoil Knot on the Left, $T_L$}\label{trefoilxyz}

\end{figure}

 Splicing on the left, we identify the longitude of $T_L$ with the meridian of $N_1$ and the meridian of $T_L$
with the longitude of $N_1$. Doing so will yield the
relations $s=x^{-1}yx^2yx^{-3}$ and $x=n$ respectively.\\
Since the node on the right is another copy of the right-handed
trefoil knot, we will call it $T_R$. This knot has the
fundamental group $\pi_1(T_R)=\langle a,b |aba=bab\rangle.$ If
we choose its meridian to be $a$, then the longitude is
$caba^{-3}=a^{-1}ba^2ba^{-3}$. The splicing on the right
happens along the $N_0$ component of the necklace, with
meridian $\mu(N_0)=n$ and longitude $\lambda(N_0)=st$. Hence
after splicing on the right, we will have the relations $st=a$
and $n=a^{-1}ba^2ba^{-3}$.\\
Given the fundamental groups of each of the building blocks,
along with the relations due to the splicing, the Seifert-Van
Kampen Theorem states that the fundamental group of the knot
$K$ is: $\pi_1(K)=\langle x, y, n, s, t, a, b | xyx=yxy,
aba=bab, ns=sn, nt=tn, x=n, s=x^{-1}yx^2yx^{-3}, st=a,
n=a^{-1}ba^2ba^{-3}\rangle.$ Simplifying this group, we get:
$\pi_1(K)= \langle x, y, s, t, b| xyx=yxy, stbst=bstb,
sx=xs,xt=tx,
s=x^{-1}yx^2yx^{-3},x=(st)^{-1}b(st)^2b(st)^{-3}\rangle$. \end{proof}

\subsection{Finding an Explicit Representation $\alpha$ that Shows $K$ is not Fibered}
 In this section, using the above presentation of $\pi_1(K)$ we find an explicit representation of $\pi_1(K) \rightarrow
 GL_5(\mathbb{Z})$ for which the twisted Alexander polynomial is not monic. To do so, we use the computer program
 Knottwister.

\begin{theorem}\label{mainthm}
For the representation $\alpha:\pi_1(K) \rightarrow S_5
\rightarrow GL_5(\mathbb{Z})$ given by
$$\alpha(a)=(15234), \alpha(b)=(13524), \alpha(n)=(14523), \alpha(s)=(12345)$$
$$\alpha(t)=(15234), \alpha(x)=(14523), \alpha(y)=(34125),$$
$\Delta_{K,\phi}^\alpha$ is not monic. (Here, one-line permutation notation is used.)
\end{theorem}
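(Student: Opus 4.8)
The plan is to check first that the listed permutations define a genuine representation of $\pi_1(K)$, and then to compute $\Delta^\alpha_{K,\phi}$ by Fox calculus applied to the presentation of Lemma~\ref{pi1k}, reading off an extreme coefficient that is not a unit in $\mathbb{Z}$.

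For the first part, recall that $\pi_1(K)$ is generated by $x,y,s,t,b$, with the eliminated generators $n=x$ and $a=st$; since the theorem assigns permutations to all of $a,b,n,s,t,x,y$, I would begin by confirming the consistency conditions $\alpha(n)=\alpha(x)$ and $\alpha(a)=\alpha(s)\alpha(t)$ in $S_5$, and then verify each defining relation: the two trefoil relations $xyx=yxy$ and $aba=bab$ (equivalently $stbst=bstb$), the commutations $xs=sx$ and $xt=tx$, and the two longitude--meridian identifications $s=x^{-1}yx^2yx^{-3}$ and $x=(st)^{-1}b(st)^2b(st)^{-3}$. Each of these is a finite computation in $S_5$. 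Composing the resulting homomorphism $\pi_1(K)\to S_5$ with the permutation representation $S_5\to GL_5(\mathbb{Z})$ yields $\alpha$.

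Next I would fix the abelianization $\phi\colon\pi_1(K)\to\mathbb{Z}=\langle t\rangle$ (meridians sent to the generator with the linking multiplicities read off from $H_1$ as in Section~2, longitudes sent to $0$) and form the twisting $g\mapsto t^{\phi(g)}\alpha(g)\in GL_5(\mathbb{Z}[t^{\pm1}])$. Following the definition of \cite{FK}, I would then assemble the twisted Alexander (Fox) matrix of the six relators against the five generators, a block matrix with $5\times5$ blocks $\bigl(\partial r_i/\partial x_j\bigr)^{\alpha\otimes\phi}$, delete one generator column block, take the gcd of the maximal minors, and divide by $\det\bigl(t^{\phi(x_j)}\alpha(x_j)-I\bigr)$ for the deleted generator $x_j$; concretely this is the computation carried out by Knottwister \cite{KT} on the presentation of Lemma~\ref{pi1k}. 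The output is an explicit Laurent polynomial over $\mathbb{Z}$, and the claim follows by exhibiting its leading coefficient (after the usual normalization up to $\pm t^k$) and observing that it is not $\pm1$.

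The routine but genuine obstacle is twofold. First, the relation check in $S_5$ must be done correctly, as a single transcription error in the cycle data invalidates everything downstream. Second, one must ensure that the normalization conventions — which column block is deleted, how the correction factor $\det\bigl(t^{\phi(x_j)}\alpha(x_j)-I\bigr)$ is divided out, and how units $\pm t^k$ are absorbed — agree with those of \cite{FK} used in Theorem~\ref{FK}, so that ``monic'' means precisely ``leading coefficient $\pm1$.'' As an independent check one can recompute $\Delta^\alpha_{K,\phi}$ by hand from a reduced $5\times5$-block Alexander matrix and compare with the Knottwister output. Once the non-unit coefficient is in hand, non-monicity is immediate, and although it is not part of the statement, the contrapositive of Theorem~\ref{FK} then re-derives the fact that $(K,\phi)$ is not fibered.
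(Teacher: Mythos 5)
Your proposal follows essentially the same route as the paper: verify that the permutations respect the relations of the presentation in Lemma~\ref{pi1k} (including the consistency conditions $\alpha(n)=\alpha(x)$ and $\alpha(a)=\alpha(s)\alpha(t)$), fix the abelianization $\phi$ with $\phi(x)=\phi(y)=\phi(s)=0$ and $\phi(b)=\phi(t)=1$, and let Knottwister carry out the Fox-calculus computation of $\Delta^\alpha_{K,\phi}$. The one place where your plan diverges from what actually happens is the endgame: you propose to exhibit a nonzero leading coefficient that is not a unit, but the computation in the paper returns the zero polynomial over $\mathbb{F}_p[t^{\pm1}]$ for $p=5,7,11,13,17,19,23,29$, i.e.\ every coefficient is divisible by all of these primes (and the polynomial is almost certainly identically zero, consistent with the genus-$1$ vanishing result of \cite{V} cited after the theorem). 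So there is no leading coefficient to point at; non-monicity is instead concluded from the vanishing of the reduction over even a single one of these finite fields. This does not invalidate your argument --- a vanishing polynomial is a fortiori not monic --- but you should phrase the final step as ``the computed polynomial is not monic because it vanishes (mod $p$)'' rather than ``its leading coefficient is a non-unit,'' since the latter presupposes the polynomial is nonzero. Your remarks about normalization conventions (which generator column is deleted and how $\det\bigl(t^{\phi(x_j)}\alpha(x_j)-I\bigr)$ is divided out) are apt and are implicitly handled by Knottwister in the paper.
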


\begin{proof} Knottwister takes the fundamental group of
$K$ along with a cohomology class $\phi$ as the input data. For
knots, $\phi$ can be chosen to be the abelianization map $\phi:
\pi_1(K) \rightarrow \mathbb{Z}$. To identify explicitly the
abelianization map $\phi$ we add the commutator relations to
the fundamental group found in Lemma \ref{pi1k}. Then the map $\phi$
is given explicitly as:
$$\phi(x)=\phi(y)=\phi(s)=0 \textnormal{ and } \phi(b)=\phi(t)=1.$$

  It can be easily checked that $\alpha$ is a homomorphism, meaning that
 it respects the relations of the fundamental group. The ordinary Alexander polynomial is $t^2-t+1$, which is
identical to that of the trefoil knot. However, Knottwister
gives the twisted Alexander polynomial $\Delta_{K,
\phi}^\alpha$ with coefficients modulo $p$ for different prime
numbers. The twisted Alexander polynomial given by this
particular representation $\alpha$ over $\Bbb{F}_{5}[t^{\pm1}],
\Bbb{F}_{7}[t^{\pm 1}], \Bbb{F}_{11}[t^{\pm}],
\Bbb{F}_{13}[t^{\pm 1}],\\
  \Bbb{F}_{17}[t^{\pm 1}],
  \Bbb{F}_{19}[t^{\pm 1}],
  \Bbb{F}_{23}[t^{\pm 1}],$ and
  $\Bbb{F}_{29}[t^{\pm 1}]$ is equal to 0. Since the twisted Alexander polynomial associated with any one of these
representations vanishes, it is not monic.

\end{proof}

We can conclude from the previous theorem and Theorem \ref{FK}
that the knot $K$ is not fibered. Clearly, having the
polynomial vanish over any of the fields above would be
sufficient to show it is not monic. However, the fact that it
vanishes over all these fields is a strong evidence that it is
indeed $0$. Since the
genus of $K$ is 1 as we saw in Proposition \ref{calcsfork},
this observation is consistent with the enhanced
version of Theorem \ref{FV} appearing in \cite{V}.

\section{2-Component Links Containing $K$}

In this section, we discuss three 2-component links that contain the knot $K$ as a component. These links are the result of adding an arrowhead vertex to the three nodes of the splice diagram of $K$.\\

\subsection{The Link $L_\alpha$}
First, we put the second arrowhead vertex on
the last node. The following is the splice diagram of this
2-component link. From now on, we call this link $L_\alpha$.
Since this link contains the knot $K$ as a component, we can
denote it as $L_\alpha = K_\alpha \bigcup K$, when $K_\alpha$
is the new component of the link.\vskip.7in

\setlength{\unitlength}{0.3cm} 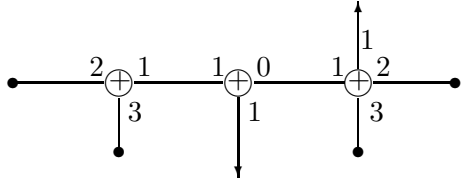
\begin{figure}[h!]
\vskip-.7in
 $$\begin{picture}(23,3)
 \put(0.9,0){\line(1,0){4}}
 \put(0.9,0){\circle*{0.5}}
 \put(5.6,0){\circle{1.2}}
 \put(5.6,-0.7){\line(0,-1){2.5}}
 \put(5.6,-3.1){\circle*{0.5}}
 \put(6.3,0){\line(1,0){3.9}}
 \put(10.9,0){\circle{1.2}}
 \put(10.9,-0.7){\vector(0,-1){3.5}}
 \put(11.6,0){\line(1,0){4}}
 \put(16.2,0){\circle{1.2}}
 \put(16.2,-0.7){\line(0,-1){2.5}}
 \put(16.2,-3.1){\circle*{0.5}}
 \put(16.9,0){\line(1,0){3.7}}
 \put(20.5,0){\circle*{0.5}}
 \put(5.15,-0.23){$+$}
 \put(4.3,0.3){$2$}
 \put(6.4,0.3){$1$}
 \put(6,-1.7){$3$}
 \put(10.4,-0.23){$+$}
 \put(9.7,0.3){$1$}
 \put(11.7,0.3){$0$}
 \put(11.3,-1.7){$1$}
 \put(15.75,-0.23){$+$}
 \put(15,0.3){$1$}
 \put(17,0.3){$2$}
 \put(16.7,-1.7){$3$}
 \put(16.2,.6){\vector(0,1){3}}
 \put(16.3,1.5){$1$}
\end{picture}$$ \vskip.3in
\caption{Splice Diagram of the Link $L_\alpha$}\label{lalpha}
\end{figure}

Using the theorems in \cite{EN}, we can easily prove the following proposition. The proof is similar to that of \ref{calcsfork} and hence is omitted.

\begin{proposition}\label{calcsforlalpha}
The $2$-component link $L_\alpha$ in Figure \ref{lalpha} has
the following properties:\

 1. Its multivariable Alexander polynomial is:
  $$\Delta_{L_\alpha}(t_1,t_2)=(t_1^{12}-t_1^6+1)(t_1^4t_2^4+t_1^2t_2^2+1)(t_1^3t_2^3+1).$$

  2. For a general $\phi=(p,q)$, the Thurston norm is: $$\|\phi\|_T = 7|p+q|+12|p|.$$

  3.  If $N$ is the exterior of the link, the pairs $(N,(0,1))$
  and $(N,(1,-1))$ are not fibered.\end{proposition}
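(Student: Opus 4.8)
The plan is to imitate the proof of Proposition \ref{calcsfork}, reading all the needed combinatorial data off the splice diagram in Figure \ref{lalpha} and then invoking the relevant theorems of Eisenbud--Neumann. First I would set up notation following \cite{EN}: the splice diagram of $L_\alpha$ has two arrowhead vertices, $v_1$ (the original knot $K$, on the middle node) and $v_2$ (the new component $K_\alpha$, on the right node), so $n=2$, together with the three nodes and three leaves, giving a total of $k=8$ vertices again. I would compute the linking numbers $l_{ij}$ between the arrowhead vertices and the other vertices by taking products of the edge weights along the appropriate paths in the diagram (the Eisenbud--Neumann rule), and record the valence-dependent quantities $\delta_i$ exactly as in Proposition \ref{calcsfork}: $\delta_i=1$ at leaves and arrowheads, $\delta_i=3$ at each node (each node has three attached leaves/arrowheads). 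With these in hand, part~1 follows from Theorem~12.1 of \cite{EN} (the multivariable Torres-type formula): one writes the product $\prod_v (\underline{t}^{\,\underline{l}_v}-1)^{\delta_v-2}$ over nodes, cancels the spurious $(t^0-1)^{\pm1}$ factors by the convention in \cite{EN}, and reads off the claimed factorization $\Delta_{L_\alpha}(t_1,t_2)=(t_1^{12}-t_1^6+1)(t_1^4t_2^4+t_1^2t_2^2+1)(t_1^3t_2^3+1)$.

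Next, for part~2, I would apply Theorem~11.1 of \cite{EN}, which expresses the Thurston norm of a class $\phi=(p,q)\in H^1$ as a sum $\sum (\delta_v-2)\,|\,\langle \phi, \text{(linking data at }v)\rangle\,|$ over nodes $v$ — equivalently $\|\phi\|_T=\sum_{\text{nodes}}(\delta_v-2)\,|p\cdot l_{v,v_1}+q\cdot l_{v,v_2}|$ with the appropriate normalization. Plugging in the three nodes, with the left node contributing the $12|p|$ term (it only "sees" the $K$-component through a weight-$6$ path doubled by $\delta-2=1$, scaled by the cabling weights to give $12$) and the middle and right nodes jointly contributing $7|p+q|$, yields $\|\phi\|_T=7|p+q|+12|p|$ as stated; I would double-check the coefficients $7$ and $12$ against the explicit edge weights $2,3,1$ appearing on each node, since getting these scalars right is the one genuinely error-prone bookkeeping step.

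Finally, for part~3, I would use Theorem~11.2 of \cite{EN}: a class $\phi$ is fibered if and only if every term in the Thurston-norm summation above is \emph{nonzero} (i.e.\ the fibered faces of the norm ball are the ones where no node's contribution degenerates). For $\phi=(0,1)$ we get $\|\phi\|_T=7|0+1|+12|0|=7$, and the $12|p|$ term vanishes, so $\phi$ is not fibered; for $\phi=(1,-1)$ we get $7|1-1|+12|1|=12$, and now the $7|p+q|$ term vanishes, so again $\phi$ is not fibered. That completes all three parts.

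The main obstacle I anticipate is purely arithmetic rather than conceptual: correctly computing the linking numbers $l_{ij}$ and the node-contribution coefficients from the splice-diagram edge weights $(2,3,1)$, $(1,0,1)$, $(1,2,3)$, keeping track of which products of weights multiply which variable, and applying the $(t^0-1)$ cancellation convention of \cite{EN} consistently. Since this is exactly the same type of computation carried out in detail in the proof of Proposition~\ref{calcsfork}, and the paper explicitly says "the proof is similar to that of \ref{calcsfork} and hence is omitted," I would in fact present only the setup and the three one-line conclusions, referring the reader to \cite{EN} and \cite{diss} for the routine verification of the intermediate numbers.
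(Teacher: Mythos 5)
Your overall strategy is exactly the one the paper intends (the paper omits this proof, saying only that it is ``similar to that of Proposition \ref{calcsfork}''), so there is no divergence of method to discuss. However, there is one concrete error in your setup that would derail all three computations if carried through: you assert that every node of the splice diagram of $L_\alpha$ still has $\delta_v=3$. That is false for the right-hand node, which acquires a fourth incident edge from the new arrowhead, so its valence is $4$ (and the vertex count is $9$, not $8$). This is not a harmless slip. In the Eisenbud--Neumann product $\prod_v(t_1^{l_{1v}}t_2^{l_{2v}}-1)^{\delta_v-2}$ the right node must enter with exponent $\delta_v-2=2$, producing the factor $\bigl((t_1t_2)^6-1\bigr)^2$, which against the leaf factors $\bigl((t_1t_2)^3-1\bigr)^{-1}\bigl((t_1t_2)^2-1\bigr)^{-1}$ yields exactly $(t_1^4t_2^4+t_1^2t_2^2+1)(t_1^3t_2^3+1)$; with exponent $1$ the product is not even a Laurent polynomial. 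Likewise, in part~2 the coefficient $7$ of $|p+q|$ arises as $2\cdot 6-3-2$; with $\delta=3$ at that node you would get $|p+q|+12|p|$ instead of $7|p+q|+12|p|$.

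Two smaller points. First, Theorem~11.1 of \cite{EN}, as used in the paper's own proof of Proposition \ref{calcsfork}, sums $(\delta_v-2)\,|p\,l_{1v}+q\,l_{2v}|$ over \emph{all} non-arrowhead vertices, leaves included (they contribute with negative sign), not only over nodes; your heuristic attribution of $12|p|$ to the left node alone is off --- the actual breakdown is $36|p|-18|p|-12|p|$ from the left node and its two leaves plus $6|p|$ from the middle node, while the right node and its two leaves give $12|p+q|-3|p+q|-2|p+q|$. Second, the stated formulas hold only with the convention that the first coordinate ($t_1$, resp.\ $p$) is dual to the new component $K_\alpha$ on the right node and the second to $K$; you should fix this explicitly, since with the opposite ordering the roles of $p$ and $q$ swap. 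Part~3 is fine once the correct norm data are in place: for $(0,1)$ the left and middle nodes contribute $0$ (their linking numbers with $K$ all vanish because of the weight-$0$ edge), and for $(1,-1)$ the right node contributes $|6-6|=0$, so Theorem~11.2 of \cite{EN} rules out fiberedness in both cases.
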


\begin{remark}\label{oapforlalpha}
From Proposition \ref{calcsforlalpha}, we can observe that for the
class $\phi=(1,-1)$ the single variable Alexander polynomial is
$$\Delta_{L_\alpha,\phi}=6(t-1)(t^{12}-t^6+1).$$
Even though $deg(\Delta_{L_\alpha,\phi})=\|\phi\|_T+1$, the
polynomial is not monic. So Theorem \ref{mcm} states that this
class is not fibered. However, for $\phi=(0,1)$, we have the following ordinary Alexander polynomial:
$$\Delta_{L_\alpha,\phi}= (t-1) (t^4+t^2+1) (t^3+1)=(t^6-1)(t^2-t+1).$$
In this case, the
Alexander polynomial is monic, and
$deg(\Delta_{L_\alpha,\phi})=8$. According to Theorem
\ref{mcm}, this result is compatible with fiberedness, but we
showed in Proposition \ref{calcsforlalpha} that it is not
fibered.
\end{remark}

\subsection{Fundamental Group of the Exterior of $L_\alpha$} In order to use twisted Alexander polynomials to discuss the fiberedness of $L_\alpha$, we need to calculate the fundamental group of its exterior.

\begin{lemma}\label{pi1lalpha}
The fundamental group of the exterior of $L_\alpha$ is:
$$\pi_1(L_\alpha)= \langle c, d, e, f,  g, h, i, j, k, l, o,
p, q, r, u, v, w, a, x, y, n, s, t | $$
$$ xyx=yxy, ns=sn, nt=tn, s=x^{-1}yx^2yx^{-3}, e=st, $$
$$gd=cg, ve=dv, cf=ec, pg=fp, vh=gv, wi=hw, aj=ia, ek=je,
rc=kr,$$
$$eo=le, rp=or, gq=pg, vr=qv, cu=rc, pv=up, hw=vh, ia=wi,
jl=aj \rangle.$$
\end{lemma}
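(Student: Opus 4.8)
The plan is to mimic the Seifert–Van Kampen computation carried out for $\pi_1(K)$ in Lemma \ref{pi1k}, but now applied to the splice diagram of $L_\alpha$ in Figure \ref{lalpha}, which differs from that of $K$ only by the extra arrowhead vertex (with weight $1$) attached to the right-hand node. First I would decompose the exterior of $L_\alpha$ along the two splice tori, exactly as before, into three pieces: the left-hand $(2,3)$-cable node (the right-handed trefoil $T_L$), the middle three-component necklace $N$, and the right-hand node, which is now the exterior of the right-handed trefoil with one \emph{additional} meridional arrowhead, i.e. a two-component link whose underlying knotted circle is the trefoil $T_R$ together with a meridian circle. I would record the fundamental group of each piece from its Wirtinger presentation: $\langle x,y\mid xyx=yxy\rangle$ for $T_L$ with meridian $x$ and longitude $x^{-1}yx^2yx^{-3}$; $\langle n,s,t\mid ns=sn,\ nt=tn\rangle$ for the necklace $N$; and for the modified right-hand piece the presentation that Knottwister/Wirtinger produces — this is where the many new generators $c,d,e,f,g,h,i,j,k,l,o,p,q,r,u,v,w$ come from, since the blackboard projection of "trefoil plus meridian" has more crossings and hence more arcs.

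Next I would assemble the relations coming from the two splicings, just as in Lemma \ref{pi1k}: at the left splice torus, identify $\lambda(T_L)$ with $\mu(N_1)$ and $\mu(T_L)$ with $\lambda(N_1)$, giving $s=x^{-1}yx^2yx^{-3}$ and $x=n$; at the right splice torus, identify along the $N_0$ component of the necklace with $\mu(N_0)=n$ and $\lambda(N_0)=st$, giving $e=st$ together with the identification of $n$ with the relevant longitude word of the right-hand piece. Seifert–Van Kampen then presents $\pi_1(L_\alpha)$ as the amalgam of the three presentations over these identifications. The listed relations $gd=cg,\ ve=dv,\ cf=ec,\ pg=fp,\ vh=gv,\ wi=hw,\ aj=ia,\ ek=je,\ rc=kr,\ eo=le,\ rp=or,\ gq=pg,\ vr=qv,\ cu=rc,\ pv=up,\ hw=vh,\ ia=wi,\ jl=aj$ are precisely the Wirtinger relations (one per crossing) of the right-hand two-component piece, and the six relations on the first two displayed lines are the relations of $T_L$, of $N$, and the four splicing identifications. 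So the bulk of the verification reduces to drawing a correct blackboard projection of the right-hand node — the trefoil cabled as in Figure \ref{lalpha} with its extra meridian — labelling its arcs $a,c,\dots,w$ and reading off the Wirtinger relations, then checking that the two longitude words used in the splicing match the presentation as stated.

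The main obstacle is bookkeeping rather than conceptual: one must produce an explicit, consistently oriented projection of the right-hand building block and verify that the nineteen generators and their relations are transcribed without error, and that the longitude of $N_0$ (read off the necklace) agrees under the amalgamation with the word in the right-hand piece that represents the corresponding boundary curve. Since, by the conventions of \cite{EN}, splicing identifies meridian-with-longitude across the torus, the only real check is orientation/sign consistency of these two longitude words; once that is confirmed, the presentation follows directly. As in Proposition \ref{calcsfork} and Lemma \ref{pi1k}, further computational details (the explicit projection and the arc-labelling) can be relegated to \cite{diss}; here I would simply state the decomposition, exhibit the three building-block presentations, list the two sets of splicing relations, and conclude by Seifert–Van Kampen that the amalgamated presentation is the one displayed.
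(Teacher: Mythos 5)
Your overall strategy is the same as the paper's: decompose the exterior of $L_\alpha$ along the two splice tori into the left trefoil $T_L$, the middle $3$-component necklace, and the right-hand node; take Wirtinger presentations of each piece; and glue with Seifert--Van Kampen using the meridian--longitude identifications ($s=x^{-1}yx^2yx^{-3}$, $x=n$ on the left, and $e=st$, $n=\lambda$ on the right). The left and middle pieces and all four splicing relations match the paper exactly.

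However, there is a genuine error in your identification of the right-hand building block. You describe it as ``the trefoil $T_R$ together with a meridian circle,'' but that is not what the splice diagram gives. When the right node is separated it carries \emph{two} arrowhead edges each with near-weight $1$, together with the leaves of weights $2$ and $3$; by Proposition 7.3 of \cite{EN} this is the Seifert link consisting of two regular fibers of the $(2,3)$ Seifert fibration of $S^3$, i.e.\ the $2$-component link $D$ of Figure \ref{DT}, whose components are two parallel copies of the right-handed trefoil with linking number $6$. A meridian circle would have linking number $1$ with $T_R$, which contradicts both the splice-diagram computation of the linking number ($2\cdot 3=6$) and the abelianization $n=v^6$ used later in the proof of Theorem \ref{firsttwist}. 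This is not mere bookkeeping: the nineteen generators $c,d,\dots,w,a$ and the eighteen Wirtinger relations in the statement, as well as the longitude word $\lambda(D)=cpvwxergve^{-3}$ entering the splicing relation $n=\lambda(D)$, are all read off from the projection of $D$, and carrying out your deferred computation on ``trefoil plus meridian'' would yield a different (and incorrect) presentation. Once the right-hand piece is correctly identified as $D$, the rest of your argument goes through as in the paper.
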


\begin{proof} Again, we need to decompose the link over its three nodes. For the node on the left and the one in the middle, the calculations are identical to those of the knot $K$. For $L_\alpha$, the node on the right before splicing is shown in Figure \ref{sdofD}.

\setlength{\unitlength}{0.3cm}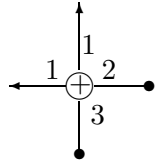
\begin{figure}[h!]
 $$\begin{picture}(40,3)
 \put(16.2,0){\circle{1.2}}
 \put(16.2,-0.7){\line(0,-1){2.5}}
 \put(16.2,-3){\circle*{0.5}}
 \put(16.9,0){\line(1,0){2.5}}
 \put(19.3,0){\circle*{0.5}}
\put(15.6,0){\vector(-1,0){2.5}}
 \put(15.75,-0.3){$+$}
 \put(14.7,0.3){$1$}
 \put(17.2,0.3){$2$}
 \put(16.7,-1.7){$3$}
 \put(16.2,.7){\vector(0,1){3}}
 \put(16.3,1.4){$1$}
\end{picture}$$\vskip.2in
\caption{Splice Diagram of the Link $D$ on the
Right}\label{sdofD}
\end{figure}
The splice diagram in Figure \ref{sdofD} represents a
2-component link, as it has two arrowhead vertices. It is the
$(2,3)$ cable on the right-handed trefoil (see Proposition 7.3
in \cite{EN}). Hence each component is a copy of the
right-handed trefoil knot, such that they have linking number
$6$. We call this 2-component
link $D$. The blackboard projection of the link $D$ is shown in Figure \ref{DT}. We only need to discuss the splicing relations on the right, as the ones on the left are identical to those of $K$. As for $K$, splicing on the
right happens along the main loop of the necklace, $N_0$. If we
choose to splice along the outer trefoil of $D$, and
choose its meridian to be $\mu(D)= e$, the longitude will be
$\lambda(D)= cpvwxergve^{-3}$. Hence the splicing relations
are:
$$n=cpvwxergve^{-3}, \textnormal{and } e=st.$$

\noindent Therefore, considering the fundamental groups of the
three building blocks of $L_\alpha$ and the relations that
result from splicing, we see that the fundamental group of the
exterior of $L_\alpha$ is:
$$\pi_1(L_\alpha)= \langle c, d, e, f,  g, h, i, j, k, l, o,
p, q, r, u, v, w, a, x, y, n, s, t | $$
$$ xyx=yxy, ns=sn, nt=tn, s=x^{-1}yx^2yx^{-3}, e=st, $$
$$gd=cg, ve=dv, cf=ec, pg=fp, vh=gv, wi=hw, aj=ia, ek=je,
rc=kr,$$
$$eo=le, rp=or, gq=pg, vr=qv, cu=rc, pv=up, hw=vh, ia=wi,
jl=aj \rangle.$$
\end{proof} \vskip-.5in

\begin{figure}[h!]

\setlength{\unitlength}{0.3cm}
$$\begin{picture}(23,1.5)
\thicklines
 \put(0,-6){\line(2,-3){2}}
 \put(0,-6){\line(-2,3){3}}
 \put(-3,-1.5){\line(1,0){22}}
 \put(3,-10.5){\line(2,-3){.6}}
 \put(4.5,-12.75){\line(2,-3){14.2}}
 \put(16,-6){\line(-2,-3){6.4}}
 \put(9,-16.5){\line(-2,-3){.6}}
 \put(16,-6){\line(2,3){3}}
 \put(7.5,-18.75){\line(-2,-3){10.5}}
 \put(-3,-10){\line(0,-1){24.5}}
 \put(-3,-10){\line(1,0){13}}
 \put(11.5,-10){\line(1,0){1}}
 \put(14,-10){\line(1,0){10}}
 \put(24,-10){\line(0,-1){5.5}}
 \put(24,-16.5){\line(0,-1){6}}
 \put(24,-23.5){\line(0,-1){7}}
 \put(24,-31.5){\line(0,-1){2.5}}
 \put(24,-34){\line(-1,0){5.4}}
 \put(2,-5){\line(2,-3){3}}
 \put(5.75,-10.63){\line(2,-3){.6}}
 \put(7,-12.5){\line(2,-3){12.4}}
 \put(2,-5){\line(1,0){12}}
 \put(14,-5){\line(-2,-3){5.7}}
 \put(7.5,-14.75){\line(-2,-3){.6}}
 \put(6,-17){\line(-2,-3){7}}
 \put(-1,-27.5){\line(0,1){15.5}}
 \put(-1,-12){\line(1,0){9.5}}
 \put(10,-12){\line(1,0){1}}
 \put(12.5,-12){\line(1,0){11}}
 \put(24.6,-12){\line(1,0){3.4}}
 \put(28,-12){\line(0,-1){4}}
 \put(28,-16){\line(-1,0){8}}
 \put(20,-16){\line(0,-1){4}}
 \put(20,-20){\line(1,0){3.4}}
 \put(24.6,-20){\line(1,0){3.4}}
 \put(28,-20){\line(0,-1){3}}
 \put(28,-23){\line(-1,0){8}}
 \put(20,-23){\line(0,-1){4}}
 \put(20,-27){\line(1,0){3.4}}
 \put(24.6,-27){\line(1,0){3.4}}
 \put(28,-27){\line(0,-1){4}}
 \put(28,-31){\line(-1,0){8.6}}
 \put(19,-3){\large$e$}
 \put(-4,-20){\large$c$}
 \put(9,-18){\large$d$}
 \put(2.1,-11.2){\large$f$}
 \put(14.8,-30){\large$g$}
 \put(24.5,-25.5){\large$h$}
 \put(24.5,-18.5){\large$i$}
 \put(19,-9.5){\large$j$}
 \put(12,-9.5){\large$k$}
 \put(17,-13.3){\large$l$}
 \put(10,-13){\large$o$}
 \put(-0.5,-16){\large$p$}
 \put(6.5,-14.5){\large$q$}
 \put(8,-6){\large$r$}
 \put(6.5,-11.2){\large$u$}
 \put(14,-22){\large$v$}
 \put(19,-24){\large$w$}
 \put(19,-18){\large$x$}
 \put(-3.5,-23){$\bigvee$}
 \put(-1.5,-23){$\bigvee$}
\end{picture}$$
\vskip4in \hskip-2in\caption{The Link $D$}\label{DT}
\end{figure}
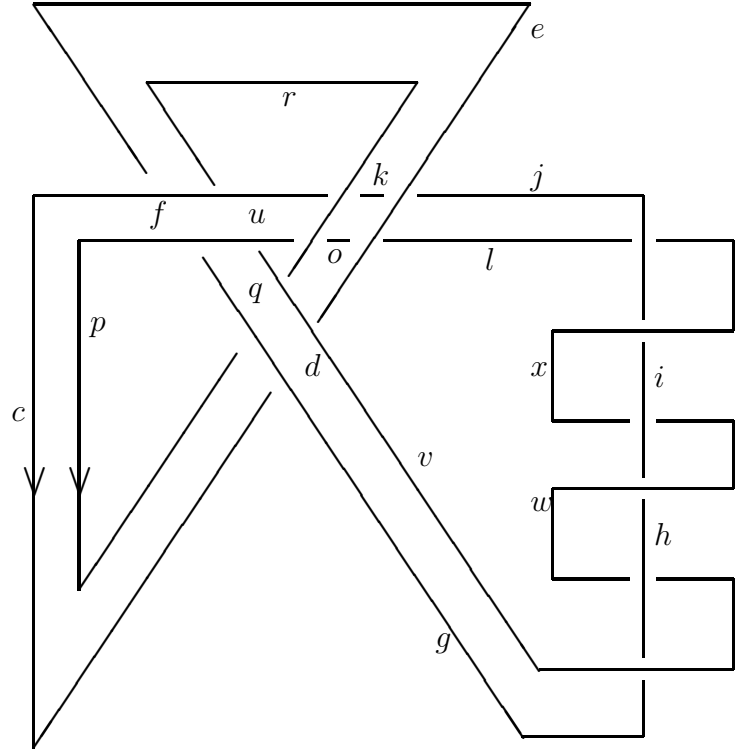

\subsection{Finding Representations for $\pi_1(L_\alpha)$ in Two Cases}
Since for all knots, the abelianization of their fundamental
group is isomorphic to $\mathbb{Z}$, if one cohomology class is
fibered, all are. However, it is possible that for the same link, some cohomology classes are fibered and others are
not. Now we will show that two
different cohomology classes for $L_\alpha$ are not fibered.

In the following theorem, we will find explicit
representations for which $\Delta_{N,\phi}^\alpha$ is not
monic, when $N$ is the exterior of $L_\alpha$ and $\phi$ is one
of the classes $(0,1)$ or $(1,-1)$. Consequently by Theorem
\ref{FK}, the pair $(N,\phi)$ is not fibered for either $\phi$.

\begin{theorem}\label{firsttwist} Let $N$ be the exterior of $L_\alpha$. For $\phi_1=(0,1)$, and $\phi_2=(1,-1)$, there are corresponding representations
$\alpha_1, \alpha_2:\pi_1(N) \rightarrow S_5 \to \mbox{GL}(\Bbb{Z},5)$ such
that $\Delta_{N,\phi_1}^{\alpha_1}$ and $\Delta_{N,\phi_2}^{\alpha_2}$ are not monic.
\end{theorem}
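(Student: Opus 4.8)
The plan is to mimic the strategy used for the knot $K$ in Theorem \ref{mainthm}: produce, for each cohomology class, an explicit permutation representation into $S_5 \hookrightarrow \mathrm{GL}_5(\mathbb{Z})$ and then invoke Knottwister to compute the associated twisted Alexander polynomial, verifying non-monicity (indeed, vanishing over several prime fields). Concretely, I would first record the abelianization maps $\phi_1=(0,1)$ and $\phi_2=(1,-1)$ explicitly on the generators of $\pi_1(L_\alpha)$ from Lemma \ref{pi1lalpha}; this is the composition of the abelianization $\pi_1(N)\to H_1(N;\mathbb{Z})\cong\mathbb{Z}^2$ with the linear functional $(p,q)\mapsto q$ (resp.\ $p-q$). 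Since $N$ has two torus boundary components, one must track which meridian maps to which standard generator of $\mathbb{Z}^2$; the meridian $t$ of the knot component $K$ and a meridian of the new component $K_\alpha$ (which is one of $c,e,\dots$ coming from the added arrowhead on the right-hand node) give the two coordinates, and the splicing relations fix the images of all remaining generators.

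Next, for each $\phi_i$ I would exhibit a homomorphism $\alpha_i:\pi_1(L_\alpha)\to S_5$ by assigning a permutation to each of the generators $c,d,e,f,g,h,i,j,k,l,o,p,q,r,u,v,w,a,x,y,n,s,t$ and checking that the assignment respects every relation listed in Lemma \ref{pi1lalpha} — the braid-type relations $xyx=yxy$, the commutators $ns=sn$, $nt=tn$, the splicing identities $s=x^{-1}yx^2yx^{-3}$, $e=st$, and the word relations coming from the node $D$. This verification is a finite check in $S_5$. Composing with the standard permutation representation $S_5\to\mathrm{GL}_5(\mathbb{Z})$ gives the desired $\alpha_i$.

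Then I would feed $(\pi_1(L_\alpha),\phi_i,\alpha_i)$ into Knottwister and report that the resulting $\Delta_{N,\phi_i}^{\alpha_i}$ fails to be monic — in practice, that it vanishes modulo a range of small primes (as happened for $K$), which is overwhelming evidence that the integral twisted Alexander polynomial is identically $0$ and in any case shows non-monicity over at least one $\mathbb{F}_p$, hence over $\mathbb{Z}$. By Theorem \ref{FK}, this immediately forces $(N,\phi_i)$ to be non-fibered, matching the direct computation in Proposition \ref{calcsforlalpha}(3).

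The main obstacle is locating representations that actually witness non-monicity: most homomorphisms $\pi_1(L_\alpha)\to S_5$ will produce perfectly monic twisted polynomials, so this is a search problem rather than a deduction, and the presentation here has $23$ generators and $22$ relations, making the search space large and the relation-checking tedious. A secondary subtlety is correctly orienting the two boundary meridians so that Knottwister evaluates the right class $\phi_i$; getting the identification of $H_1(N;\mathbb{Z})\cong\mathbb{Z}^2$ wrong would silently compute the wrong polynomial. Once a candidate $\alpha_i$ is found, the argument is routine: verify the relations, run the program, and read off the coefficients. Full details of the search and of the larger computation appear in \cite{diss}.
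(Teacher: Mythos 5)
Your proposal follows essentially the same route as the paper: make the abelianization and the classes $\phi_1=(0,1)$, $\phi_2=(1,-1)$ explicit on the generators of the presentation in Lemma \ref{pi1lalpha}, use Knottwister to search for permutation representations into $S_5\hookrightarrow \mathrm{GL}_5(\mathbb{Z})$ respecting the relations, and conclude non-monicity from the vanishing of $\Delta_{N,\phi_i}^{\alpha_i}$ over several prime fields. The paper simply records the explicit permutations that the search produced, exactly as your plan anticipates.
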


\begin{proof} First, we need to understand what $\phi_1$ does as a map.
We add all the commutator relations to the fundamental group in
Lemma \ref{pi1lalpha}. This will result in the following
relations:
$$c=d=e=f=g=h=i=j=k=t$$
$$o=l=p=q=r=u=v=w=a$$
$$s=1, x=y=n=v^6.$$

As expected for a 2-component link, the abelianization of
$\pi_1(L_\alpha)$ is isomorphic to $\mathbb{Z}\oplus
\mathbb{Z}$. We can see from the splice diagram of this link
that the two components that survive are one of the hanging
loops of the necklace, $N_2$, and the trefoil knot inside the
link $D$. These are the  arrowhead vertices in the splice
diagram. Hence $\phi_1$ is the homomorphism that sends $v$ to 0,
and $t$ to 1. Again, Knottwister takes the fundamental
group of $L_\alpha$ from Lemma \ref{pi1lalpha}, along with the
homomorphism $\phi_1$ as an input. In multiplicative notation,
$\phi_1$ is the following map:
$$\phi_1(c)=\phi_1(d)=\phi_1(e)=\phi_1(f)=\phi_1(g)=\phi_1(h)=\phi_1(i)=\phi_1(j)=\phi_1(k)=\phi_1(t)=1$$
$$\phi_1(a)=\phi_1(l)=\phi_1(o)=\phi_1(p)=\phi_1(q)=\phi_1(r)=\phi_1(u)=\phi_1(v)=\phi_1(w)=\phi_1(w)=\phi_1(a)$$
$$=\phi_1(s)=\phi_1(x)=\phi_1(y)=\phi_1(n)=0.$$

Knottwister gives the following representation
$\alpha_1:\pi_1(N) \to S_5 \to \mbox{GL}(\Bbb{Z},5)$, when the
elements in $S_5$ are written in on-line permutation form:
\[ \begin{array}{rclrclrclrcl}
a &\mapsto& (13245)&c &\mapsto& (23415)&d &\mapsto& (45321)&e &\mapsto& (24351)\\
f &\mapsto& (32514)&g &\mapsto& (13524)&h &\mapsto& (14532)&i &\mapsto& (15234)\\
j &\mapsto& (13524)&k &\mapsto& (31425)&l &\mapsto& (14325)&n &\mapsto& (45312)\\
o &\mapsto& (21345)&p &\mapsto& (21345)&q &\mapsto& (42315)&r &\mapsto& (21345)\\
s &\mapsto& (12345)&t &\mapsto& (24351)&u &\mapsto& (42315)&v &\mapsto& (14325)\\
w &\mapsto& (15342)&x &\mapsto& (45312)&y &\mapsto& (42513).\\
\end{array} \]

\noindent For this twist, the twisted Alexander polynomial, $\Delta_{N,\phi_1}^{\alpha_1}$, vanishes over the fields $\Bbb{F}_{7}[t^{\pm 1}], \Bbb{F}_{11,}[t^{\pm 1}],$
$ \Bbb{F}_{13}[t^{\pm 1}]$
$\Bbb{F}_{17}[t^{\pm 1}], \Bbb{F}_{19}[t^{\pm 1}], \Bbb{F}_{23}[t^{\pm 1}],$ and $\Bbb{F}_{29}[t^{\pm 1}]$. Since the twisted Alexander polynomial vanishes over
these finite fields, it cannot be monic. \\

Now, we do the same for $\phi_2=(1,-1)$. Using multiplicative notation, $\phi_2$ can be viewed as the map
that acts as follows on the generators of $\pi_1(L_\alpha)$:
$$\phi_2(c)=\phi_2(d)=\phi_2(e)=\phi_2(f)=\phi_2(g)=\phi_2(h)=\phi_2(i)=\phi_2(j)=\phi_2(k)=\phi_2(t)=-1$$
$$\phi_2(a)=\phi_2(l)=\phi_2(o)=\phi_2(p)=\phi_2(q)=\phi_2(r)=\phi_2(u)=\phi_2(v)=\phi_2(w)=\phi_2(w)=\phi_2(a)=1$$
$$\phi_2(s)=0,\phi_2(x)=\phi_2(y)=\phi_2(n)=6.$$

\noindent Given this information, Knottwister gives us the
following representation $\alpha_2$
 (in one-line permutation form):

 $$\alpha_2:\pi_1(M) \to S_5 \to \mbox{GL}(\Bbb{Z},5)$$

\[ \begin{array}{rclrclrclrcl}
a &\mapsto& (24513)&c &\mapsto& (45132)&d &\mapsto& (35124)&e &\mapsto& (23154)\\
f &\mapsto& (21534)&g &\mapsto& (24513)&h &\mapsto& (45213)&i &\mapsto& (53214)\\
j &\mapsto& (54231)&k &\mapsto& (51432)&l &\mapsto& (45213)&n &\mapsto& (12345)\\
o &\mapsto& (41523)&p &\mapsto& (54231)&q &\mapsto& (25431)&r &\mapsto& (54123)\\
s &\mapsto& (12345)&t &\mapsto& (23154)&u &\mapsto& (54231)&v &\mapsto& (54231)\\
w &\mapsto& (54123)&x &\mapsto& (12345)&y &\mapsto& (12345).\\
\end{array} \]

\noindent For this representation, the twisted Alexander
polynomial, $\Delta_{N,\phi_2}^{\alpha_2}$, vanishes over $\Bbb{F}_{5}[t^{\pm 1}]$ and all of the fields previously mentioned for $\Delta_{N,\phi_1}^{\alpha_1}$. Hence neither twisted Alexander polynomial is monic as
claimed.\end{proof}

Again, by Theorem \ref{FK}, the pairs $(N, (0,1))$ and $(N,(1,-1))$ are not
fibered.

\subsection{Links $L_\beta$ and $L_\gamma$}
In this section, we briefly discuss the 2-component link that results from adding an arrowhead vertex to the middle node, $L_\beta$, and the one that results from adding it to the first node, $L_\gamma$. We use the theorems in \cite{EN} to conclude the following propositions.\\

\begin{proposition}\label{calcsforlbeta}
For the link $L_\beta$ the following are true:\

 1. The Alexander polynomial vanishes.\

2. The Thurston norm of the class $\phi=(p,q)$ on $L_\beta$ is
$|p+q|$.\

3. No cohomology class $\phi$ on $L_\beta$ is fibered.

\end{proposition}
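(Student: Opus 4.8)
The plan is to run the machinery of \cite{EN} on the splice diagram of $L_\beta$ exactly as in the proof of Proposition~\ref{calcsfork}. The diagram of $L_\beta$ is the one in Figure~\ref{knotk} with a second arrowhead vertex attached to the middle node, the new edge carrying weight $1$ (as for the arrowhead added to build $L_\alpha$); I keep the vertex labels $v_1,\dots,v_8$ of Figure~\ref{kwithvertices} and call the new arrowhead $v_9$. The only valency that changes is $\delta_{v_5}$, which becomes $4$. Next I would read off the linking numbers. Since the weight-$0$ edge at $v_5$ still separates the two arrowheads from the left-hand branch, one gets $\ell_{1,2}=\ell_{1,3}=\ell_{1,4}=\ell_{1,5}=0$ and $\ell_{1,6}=6,\ \ell_{1,7}=3,\ \ell_{1,8}=2$ just as for $K$ (the new edge has weight $1$, so it does not disturb these), and symmetrically $\ell_{9,2}=\ell_{9,3}=\ell_{9,4}=\ell_{9,5}=0$, $\ell_{9,6}=6$, $\ell_{9,7}=3$, $\ell_{9,8}=2$; moreover the two components are unlinked, $\ell_{1,9}=0$.

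For parts (2) and (3) I would feed this into Theorems~11.1 and 11.2 of \cite{EN}. For $\phi=(p,q)$ the $v$-th term of the norm sum is $(\delta_v-2)\,\bigl|p\,\ell_{1v}+q\,\ell_{9v}\bigr|$; it vanishes for $v\in\{v_2,v_3,v_4,v_5\}$, equals $6|p+q|$ at $v_6$, $-3|p+q|$ at $v_7$, and $-2|p+q|$ at $v_8$, so $\|\phi\|_T=(6-3-2)|p+q|=|p+q|$. Since the terms at the nodes $v_3$ and $v_5$ are identically zero in $(p,q)$, Theorem~11.2 of \cite{EN} applies verbatim as in Proposition~\ref{calcsfork}: for every cohomology class $\phi$ on $L_\beta$ some term of the norm sum vanishes, so no class is fibered.

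For part (1) I would apply Theorem~12.1 of \cite{EN}, which expresses $\Delta_{L_\beta}(t_1,t_2)$ up to units as a product over the vertices of factors $\bigl(t_1^{\ell_{1v}}t_2^{\ell_{9v}}-1\bigr)^{\delta_v-2}$, with the factors $(t_1^0t_2^0-1)^{\pm1}$ cancelled as prescribed there. The only vertices contributing a vanishing factor $t_1^0t_2^0-1=0$ are those with $\ell_{1v}=\ell_{9v}=0$, namely $v_2,v_3,v_4,v_5$, whose exponents $\delta_v-2$ are $-1,+1,-1,+2$. Unlike the knot case, where the analogous tally was $-1+1-1+1=0$ and everything cancelled, the extra $+1$ here coming from the raised valency at $v_5$ leaves a net factor $(t_1^0t_2^0-1)^{+1}=0$, so $\Delta_{L_\beta}\equiv 0$. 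I would finish by noting that this is exactly the situation in which Theorem~\ref{mcm} is powerless, which is what makes the combinatorial criterion of \cite{EN} (or, alternatively, a twisted Alexander polynomial computation as in Section~3) necessary here.

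The work is entirely bookkeeping: drawing the augmented splice diagram correctly and respecting the sign and cancellation conventions of \cite{EN}. The one point that actually matters is that the new arrowhead edge must carry weight $1$ rather than $0$; a weight-$0$ edge there would kill $\ell_{1v}$ along the right-hand branch and break the Thurston-norm computation, so that is the step I would verify most carefully. There is no conceptual difficulty, and this is precisely the kind of routine but error-prone computation the paper leaves to \cite{diss}.
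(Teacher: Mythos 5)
Your proposal is correct and is precisely the computation the paper leaves implicit: the paper omits the proof, citing only the theorems of \cite{EN} (and \cite{diss}), and your application of Theorems 11.1, 11.2, and 12.1 to the augmented splice diagram — with the new weight-$1$ arrowhead at the middle node raising $\delta_{v_5}$ to $4$, leaving all linking numbers through the weight-$0$ edge equal to zero, and producing the net positive power of $(t_1^0t_2^0-1)$ that kills the Alexander polynomial — reproduces all three claims exactly as in the model proof of Proposition~\ref{calcsfork}. No discrepancies with the paper's approach or conclusions.
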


\begin{proposition}\label{calcsforlgamma}
The link $L_\gamma$ has the following properties.\

1. The Alexander polynomial vanishes.\

2. The Thurston norm for a class $\phi=(p,q)$ on this link is
$7|p|+|6p+q|$.\

3. No class $\phi$ on this link is fibered. \end{proposition}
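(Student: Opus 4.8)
The plan is to run the Eisenbud--Neumann calculus of \cite{EN} on the splice diagram of $L_\gamma$, exactly as in the proof of Proposition~\ref{calcsfork}. First I would draw that diagram: it is the splice diagram of $K$ from Figure~\ref{knotk} with a second arrowhead vertex $K_\gamma$ attached to the leftmost node (the $(2,3)$-cabling node), the new edge carrying weight $1$, in complete analogy with the way the extra arrowhead of $L_\alpha$ sits on the last node in Figure~\ref{lalpha}. Then, for every non-arrowhead vertex $v$ (and for the two arrowheads, with the usual self-linking convention $-1$), I would tabulate the valency $\delta_v$ --- these are as for $K$ except that the leftmost node now has valency $4$ --- together with the two linking numbers $\ell_{K,v}$ and $\ell_{K_\gamma,v}$, read off the diagram by the standard rule (product of the edge weights met by, but not lying on, the geodesic joining $v$ to the relevant arrowhead). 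The crucial inherited feature is the weight-$0$ edge issuing from the middle node: as in Proposition~\ref{calcsfork} it forces $\ell_{K,v}=0$ for all of $v_2,v_3,v_4,v_5$, and it likewise forces $\ell_{K_\gamma,v_5}=0$; by contrast, attaching $K_\gamma$ to the leftmost node makes $\ell_{K_\gamma,v}$ \emph{nonzero} for $v=v_2,v_3,v_4$, since from there $K_\gamma$ sees the cabling weights $2,3,1$.

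With this table in hand, part~(2) is Theorem~11.1 of \cite{EN}:
$$\|\phi\|_T=\|(p,q)\|_T=\sum_{v}(\delta_v-2)\,\bigl|\,p\,\ell_{K_\gamma,v}+q\,\ell_{K,v}\,\bigr|,$$
the sum running over the non-arrowhead vertices; substituting the linking numbers and grouping the three leaf contributions (coefficient $-1$) against the contributions of the three nodes collapses the sum to $7|p|+|6p+q|$, which is the two-variable analogue of the bookkeeping that produced $\|(1)\|_T=1$ for $K$. Part~(1) is Theorem~12.1 of \cite{EN}, which gives, up to units,
$$\Delta_{L_\gamma}(t_1,t_2)\;\doteq\;\prod_{v}\bigl(t_1^{\ell_{K_\gamma,v}}t_2^{\ell_{K,v}}-1\bigr)^{\delta_v-2},$$
the product over all vertices $v$, subject to the convention that reciprocal pairs $(t^0-1)^{+1}$ and $(t^0-1)^{-1}$ cancel. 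For $K$ the four vertices with both linking numbers zero balanced out ($v_3,v_5$ with exponent $+1$ against $v_2,v_4$ with exponent $-1$); but once $K_\gamma$ is attached, the only surviving vertex with $\ell_{K_\gamma,v}=\ell_{K,v}=0$ is the middle node $v_5$, with exponent $\delta_{v_5}-2=+1$, so its factor $(t_1^0t_2^0-1)=0$ is left uncancelled and $\Delta_{L_\gamma}\equiv 0$.

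For part~(3): since $\Delta_{L_\gamma}(t_1,t_2)\equiv 0$, every one-variable specialization $\Delta_{L_\gamma,\phi}(t)$ vanishes too (this is visible already from a Torres-type product formula, compare Remark~\ref{oapforlalpha}), hence is not monic, so Theorem~\ref{mcm} forbids every class $\phi$ from being fibered. Equivalently one invokes Theorem~11.2 of \cite{EN} directly: the middle-node term $(\delta_{v_5}-2)\,|p\,\ell_{K_\gamma,v_5}+q\,\ell_{K,v_5}|$ is $0$ for \emph{every} $(p,q)$, and by that theorem a vanishing term in the norm sum already obstructs fiberedness --- precisely the mechanism used for $K$ in Proposition~\ref{calcsfork}.

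The only genuine work is the first step: correctly transcribing the Eisenbud--Neumann weight conventions into the linking numbers $\ell_{K,v},\ell_{K_\gamma,v}$ (the signs, and which edges lie off each geodesic), and in particular keeping careful track of the weight-$0$ edge, since it is responsible both for the vanishing of $\Delta_{L_\gamma}$ and for the degeneracy of the middle node that shapes the Thurston-norm formula. Once the diagram data are tabulated, parts~(1)--(3) reduce to direct substitutions into Theorems 11.1, 11.2 and 12.1 of \cite{EN}, just as for $K$; this is why the paper states that these are routine applications of \cite{EN}.
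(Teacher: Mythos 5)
Your proposal is correct and is exactly the route the paper intends: the paper gives no proof of Proposition~\ref{calcsforlgamma}, saying only that it follows from the theorems of \cite{EN} as in Proposition~\ref{calcsfork}, and your plan --- tabulating the $\delta_v$ and the two families of linking numbers, noting that the weight-$0$ edge kills both linking numbers at the middle node so that its uncancelled $(t_1^0t_2^0-1)$ factor forces $\Delta_{L_\gamma}\equiv 0$ and its vanishing term obstructs fiberedness for every $(p,q)$, and substituting into Theorems 11.1, 11.2 and 12.1 of \cite{EN} --- is precisely that computation, and your linking numbers do reproduce $7|p|+|6p+q|$. The only quibble is the parenthetical ``three leaf contributions'' (there are four leaves, $v_2,v_4,v_7,v_8$), which does not affect the argument.
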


We can use similar techniques to find the fundamental groups of these links. We have discussed the three ``building blocks" of $L_\gamma$ already. For the link $L_\beta$, notice that the middle node gives the splice diagram of a 4-component necklace. The following propositions give the fundamental groups of the exteriors of these two links.

\begin{proposition}
The fundamental group of $L_\beta$ is the following:
$$\pi_1(L_\beta)=\langle x, y, a, b, s, r, t, n | aba=bab, xyx=yxy, $$
$$ nr=rn, nt=tn, ns=sn, x=n, s=x^{-1}yx^2yx^{-3}, a=rst,
n=a^{-1}ba^2ba^{-3}\rangle.$$
\end{proposition}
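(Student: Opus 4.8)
The plan is to follow verbatim the strategy used to prove Lemma \ref{pi1k}: cut the exterior of $L_\beta$ along the two splice tori coming from the edges of its splice diagram into three pieces corresponding to the left node, the middle node, and the right node; compute the fundamental group of each piece; and reassemble with the Seifert-Van Kampen theorem, using the splicing conventions of \cite{EN}. The left and right nodes of $L_\beta$ are unchanged from $K$: by Proposition 7.3 in \cite{EN} each is the $(2,3)$-cable on the unknot, hence a right-handed trefoil. As in Lemma \ref{pi1k}, I would write $\pi_1(T_L)=\langle x,y\mid xyx=yxy\rangle$ with meridian $\mu(T_L)=x$ and longitude $\lambda(T_L)=x^{-1}yx^2yx^{-3}$, and $\pi_1(T_R)=\langle a,b\mid aba=bab\rangle$ with meridian $\mu(T_R)=a$ and longitude $\lambda(T_R)=a^{-1}ba^2ba^{-3}$ (Remark 3.13 of \cite{B}).

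The only genuinely new computation is the middle node. Adding an arrowhead (necessarily of weight $1$) to it replaces the $3$-component necklace used for $K$ by a $4$-component necklace, with a main loop $N_0$ (the weight-$0$ arrowhead) and three hanging loops $N_1,N_2,N_3$ (the weight-$1$ arrowheads). Here $N_1$ will be spliced to $T_L$, $N_0$ will be spliced to $T_R$, and $N_2,N_3$ survive as the two arrowheads of $L_\beta$, namely $K$ and the new component. I would draw the blackboard projection of this necklace exactly as in Figure \ref{3compnl} but with one extra hanging loop, and run the Wirtinger algorithm; after the usual simplifications this gives $\langle n,s,r,t\mid ns=sn,\ nr=rn,\ nt=tn\rangle$, where $n=\mu(N_0)$ and $s=\mu(N_1)$, $t=\mu(N_2)$, $r=\mu(N_3)$, and where the longitude of the main loop is the product of the hanging meridians taken in cyclic order, $\lambda(N_0)=rst$.

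It then remains to record the two sets of splicing relations and apply Seifert-Van Kampen. Splicing on the left identifies $\lambda(T_L)$ with $\mu(N_1)$ and $\mu(T_L)$ with $\lambda(N_1)$, producing $s=x^{-1}yx^2yx^{-3}$ and $x=n$ exactly as for $K$. Splicing on the right is done along $N_0$: identifying $\lambda(N_0)=rst$ with $\mu(T_R)=a$ and $\mu(N_0)=n$ with $\lambda(T_R)=a^{-1}ba^2ba^{-3}$ gives $a=rst$ and $n=a^{-1}ba^2ba^{-3}$. Amalgamating the three vertex groups over these identifications yields precisely the presentation in the statement; I would leave it unsimplified, since all of $n,s,r,t$ are still needed to name the meridians of the two arrowhead components.

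I expect the only real obstacle to be bookkeeping rather than mathematics: as in Lemma \ref{pi1k}, one has to fix the orientations that make both cabling nodes right-handed trefoils, get the cyclic order of the hanging meridians correct in $\lambda(N_0)=rst$, and match the correct member of each meridian/longitude pair across each splice torus, all according to the conventions of \cite{EN} and \cite{B}. None of this is deep — the whole argument is just the computation behind Proposition \ref{calcsfork} and Lemma \ref{pi1k} carried out with one additional hanging loop — which is why the result can be stated without a detailed proof.
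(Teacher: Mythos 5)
Your proposal is correct and follows exactly the route the paper indicates (the paper states this proposition without proof, noting only that the middle node now gives a $4$-component necklace): the same three-piece splice decomposition as in Lemma \ref{pi1k}, with the necklace group enlarged to $\langle n,s,r,t\mid ns=sn,\ nr=rn,\ nt=tn\rangle$ and the right-hand splicing relation becoming $a=rst$. Your observation that the presentation is left unsimplified (retaining $a$ and $n$) also matches the stated result.
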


\begin{proposition}
The fundamental group of $L_\gamma$ is the
following group:
$$\pi_1(L_\gamma)=\langle a, b, n, s, t, c, d, e, f, g, h, i, j, k, o, l, p, q, r, u, v, w |$$
$$ gd=cg, ve=dv, cf=ec, pg=fp, vh=gv, wi=hw, xj=ix, ek=je,
rc=kr, $$
$$eo=le, rp=or, gq=pg, vr=qv, cu=rc, pv=up, hw=vh, ix=wi,
jl=xj, $$
$$ aba=bab, ns=sn, nt=tn, a=st, n=a^{-1}ba^2ba^{-3}, e=n,
s=cpvwxergve^{-3}\rangle.$$
\end{proposition}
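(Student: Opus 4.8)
The plan is to mimic the two fundamental-group computations already carried out, namely for $K$ in Lemma~\ref{pi1k} and for $L_\alpha$ in Lemma~\ref{pi1lalpha}, since the splice diagram of $L_\gamma$ is obtained from that of $K$ by attaching an arrowhead vertex to the \emph{first} node. Splitting $L_\gamma$ over its three nodes exhibits it as the splice of three building blocks, all of which have already appeared: the $2$-component link $D$ (the $(2,3)$-cable on the right-handed trefoil, with linking number $6$) now sitting at the first node, the $3$-component necklace of Figure~\ref{3compnl} in the middle, and a single right-handed trefoil $T_R$ at the last node. The fundamental group of each piece is known: $\pi_1(D)=\langle c,d,e,f,g,h,i,j,k,l,o,p,q,r,u,v,w,x\mid R_D\rangle$, where $R_D$ is the $18$-relator Wirtinger set read off the blackboard projection in Figure~\ref{DT} (exactly as in the proof of Lemma~\ref{pi1lalpha}); the necklace has fundamental group $\langle n,s,t\mid ns=sn,\ nt=tn\rangle$; and $\pi_1(T_R)=\langle a,b\mid aba=bab\rangle$.

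Next I would record the meridian--longitude data needed for the two splices, keeping the same orientation conventions as for $K$. For $D$, choose $\mu(D)=e$; then, by the convention in Remark~3.13 of \cite{B}, $\lambda(D)=cpvwxergve^{-3}$ (the same computation used for $L_\alpha$). For the necklace, put $\mu(N_1)=s$ on the left hanging loop, so that $\lambda(N_1)=n$ is a meridian of the main loop, just as $\lambda(N_1)=n$ was used for $T_L$ in the computation for $K$; and $\mu(N_0)=n$, $\lambda(N_0)=st$ on the main loop. For the trefoil on the right, set $\mu(T_R)=a$, whence $\lambda(T_R)=a^{-1}ba^2ba^{-3}$, as computed in the proof of Lemma~\ref{pi1k}.

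Then the Seifert--Van Kampen theorem finishes the argument. Splicing $D$ to the left hanging loop $N_1$ identifies the longitude of one side with the meridian of the other, giving $s=\lambda(D)=cpvwxergve^{-3}$ and $e=\mu(D)=\lambda(N_1)=n$. Splicing the main loop $N_0$ to $T_R$ on the right gives, in the same way, $a=\lambda(N_0)=st$ and $n=\lambda(T_R)=a^{-1}ba^2ba^{-3}$. Forming the free product of the three presentations above and adjoining these four relations yields exactly the presentation in the statement; unlike in Lemma~\ref{pi1k}, no further simplification is performed.

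The only genuinely computational, and hence error-prone, step is the first one: extracting the $18$-relator Wirtinger presentation of $\pi_1(D)$ and, in particular, the longitude word $cpvwxergve^{-3}$ from the projection in Figure~\ref{DT}; this has to be done carefully, exactly as for $L_\alpha$. Everything afterward is bookkeeping, where the one pitfall is matching the meridian/longitude roles correctly at each splice (meridian of one component against longitude of the other) and checking that the orientation choices agree with those made for $K$, so that $\lambda(N_1)=n$ and $\lambda(N_0)=st$ are indeed the correct longitudes to use.
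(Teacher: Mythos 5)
Your proposal is correct and follows exactly the route the paper intends: the paper omits a written proof of this proposition, saying only that the ``building blocks'' of $L_\gamma$ have already been discussed and that the techniques of Lemmas~\ref{pi1k} and~\ref{pi1lalpha} apply, and your decomposition (link $D$ at the first node, the necklace in the middle, $T_R$ at the last node) together with the four splice relations $s=cpvwxergve^{-3}$, $e=n$, $a=st$, $n=a^{-1}ba^2ba^{-3}$ reproduces the stated presentation. (You even correctly include $x$ among the Wirtinger generators of $D$, which the proposition's generator list inadvertently omits although $x$ appears in its relations.)
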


\section{A ``Secondary" Polynomial,
$\widetilde{\Delta}_1^\alpha(t)$}

Since the ordinary Alexander
polynomial is $0$ for $L_\beta$ and $L_\gamma$, we may not use
Theorem \ref{mcm} to get a useful bound for the Thurston norm.
From now on, we will only be concerned with the single-variable
version of the twisted Alexander polynomial for simplicity. Also,
since $\mathbb{F}[t^{\pm1}]$ is a principal ideal domain, we replace $\mathbb{Z}[t^{\pm1}]$ by $\mathbb{F}[t^{\pm1}]$ in
the definition of the Alexander module where
$\mathbb{F}=\mathbb{F}_p$ is a field. As a result, we have the following isomorphism:
$$ H_1 (N, \mathbb{F}^k[t^{\pm 1}])\cong
\mathbb{F}[t^{\pm1}]^r\oplus \displaystyle\bigoplus_{j=1}^m
\mathbb{F}[t^{\pm 1}]/(p_j(t))$$ for $p_1(t),...,p_m(t) \in
\mathbb{F}[t^{\pm 1}].$ The type of polynomials we will examine
are defined by:
$$\widetilde{\Delta}_{N,\phi}^\alpha := \displaystyle \prod
_{j=1}^ m p_j(t)$$ regardless of the rank $r$. Not much is
known about these polynomials.

 S. Friedl and T. Kim
have proved the following theorem that relates these
polynomials to the Thurston norm in \cite{FK}.

\begin{theorem} \label{tildapolys}\textnormal{(Friedl-Kim, \cite{FK})} Let $L=L_1\cup L_2 \cup .... \cup L_m$ be a
link with $m$ components. Denote its meridian by
$\mu_1,...,\mu_m$. Let $\phi \in H^1(X(L);\mathbb{Z})$, be
primitive and dual to a meridian $\mu_i$,when $X(L)$ denotes
the exterior of $L$. Hence $\phi(\mu_i)=1$ for some $i$ and
$\phi(\mu_j)=0$ for $j\neq i$. Then
$$\| \phi \|_T \geq
\frac{1}{k}deg(\widetilde{\Delta}_1^\alpha(t))-1.$$ Here, $k$ is
the size of the representation $\alpha$.

\end{theorem}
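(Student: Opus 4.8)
The plan is to prove the inequality by a twisted version of McMullen's surface argument: realize $\phi$ by a Thurston‑norm minimizing surface, cut the link exterior along it, and extract from the resulting Mayer--Vietoris sequence a presentation of the torsion of $H_1$ of the one‑variable twisted Alexander module. Throughout write $X=X(L)$, fix the field $\mathbb{F}=\mathbb{F}_p$, regard $\alpha$ together with $\phi$ as equipping $X$ with the coefficient system $\mathbb{F}^k[t^{\pm1}]$, and abbreviate $V[t^{\pm1}]:=V\otimes_{\mathbb{F}}\mathbb{F}[t^{\pm1}]$. Since $H_1(X;\mathbb{F}^k[t^{\pm1}])\cong\mathbb{F}[t^{\pm1}]^{r}\oplus\bigoplus_j\mathbb{F}[t^{\pm1}]/(p_j)$, one has $\deg\widetilde{\Delta}_1^{\alpha}(t)=\sum_j\deg p_j=\dim_{\mathbb{F}}\operatorname{tor}_{\mathbb{F}[t^{\pm1}]}H_1(X;\mathbb{F}^k[t^{\pm1}])$, so the statement is equivalent to
$$\dim_{\mathbb{F}}\operatorname{tor}_{\mathbb{F}[t^{\pm1}]}H_1\bigl(X;\mathbb{F}^k[t^{\pm1}]\bigr)\ \le\ k\bigl(\|\phi\|_T+1\bigr).$$

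First I would choose a properly embedded oriented surface $S\subset X$ Poincar\'e dual to $\phi$ that realizes the Thurston norm, has no sphere or disk components, and --- using crucially that $\phi$ is primitive and dual to the meridian $\mu_i$ --- is \emph{essentially connected}: it meets $\partial\nu(L_i)$ in a single longitude‑type curve, meets the other boundary tori only in meridional curves, and has one component carrying all of its Euler characteristic. Cutting $X$ along $S$ yields a compact manifold $M$ with two inclusions $\iota_{\pm}\colon S\hookrightarrow\partial M$, and the infinite cyclic cover $\widetilde{X}_\phi$ determined by $\phi$ is $\bigcup_{n\in\mathbb{Z}}t^nM$ with $t^n\iota_+(S)$ glued to $t^{n+1}\iota_-(S)$. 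The Mayer--Vietoris sequence of this decomposition, taken with $\mathbb{F}^k$‑coefficients, is a sequence of $\mathbb{F}[t^{\pm1}]$‑modules that includes
$$H_1(S;\mathbb{F}^k)[t^{\pm1}]\ \xrightarrow{\ \partial_1\ }\ H_1(M;\mathbb{F}^k)[t^{\pm1}]\ \longrightarrow\ H_1\bigl(X;\mathbb{F}^k[t^{\pm1}]\bigr)\ \longrightarrow\ \ker\partial_0\ \longrightarrow\ 0,$$
where $\partial_i=t\,(\iota_+)_{\ast}-(\iota_-)_{\ast}$ and $\partial_0$ is the corresponding map on $H_0$. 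As $\ker\partial_0$ is a submodule of a free $\mathbb{F}[t^{\pm1}]$‑module it is free, hence projective, so the short exact sequence $0\to\operatorname{coker}\partial_1\to H_1(X;\mathbb{F}^k[t^{\pm1}])\to\ker\partial_0\to0$ splits and $\operatorname{tor}H_1(X;\mathbb{F}^k[t^{\pm1}])\cong\operatorname{tor}(\operatorname{coker}\partial_1)$.

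Now comes the linear‑algebra estimate. In suitable $\mathbb{F}$‑bases the matrix of $\partial_1$ has the form $tA-B$ with $A,B$ over $\mathbb{F}$, so every entry is a polynomial of degree at most $1$ in $t$; hence any nonzero $\rho\times\rho$ minor, where $\rho=\operatorname{rank}_{\mathbb{F}[t^{\pm1}]}\partial_1$, has degree at most $\rho$, and therefore the product $d_1\cdots d_\rho$ of the elementary divisors of $\partial_1$ over $\mathbb{F}[t^{\pm1}]$ has degree at most $\rho$. Consequently
$$\dim_{\mathbb{F}}\operatorname{tor}(\operatorname{coker}\partial_1)=\sum_i\deg d_i\ \le\ \rho\ \le\ \dim_{\mathbb{F}}H_1(S;\mathbb{F}^k).$$
Finally I would bound the right‑hand side: each component $S_j$ of $S$ has nonempty boundary, hence is homotopy equivalent to a wedge of circles, so $\dim H_1(S_j;\mathbb{F}^k)=\dim H_0(S_j;\mathbb{F}^k)-k\chi(S_j)\le k-k\chi(S_j)$; summing over $j$ and using that the chosen $S$ is essentially connected with $\sum_j(-\chi(S_j))=\|\phi\|_T$ gives $\dim_{\mathbb{F}}H_1(S;\mathbb{F}^k)\le k+k\|\phi\|_T$, which is exactly the bound required.

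The step I expect to be the genuine obstacle is the reduction in the second paragraph: controlling the number of components of a norm‑minimizing representative, or equivalently justifying that only a single additive $k$ (rather than one factor of $k$ per component or per closed/annular piece) enters the final count. This is precisely where the hypothesis that $\phi$ be dual to a meridian $\mu_i$ is used --- it pins down the boundary pattern of $S$ and allows one to isotope off, or discard, all pieces except one essential component while absorbing the $H_0(S_j;\mathbb{F}^k)$ contributions of the rest. Morally this constant is the twisted analogue of the correction $\deg\Delta_{N,\phi,0}+\deg\Delta_{N,\phi,2}$ that appears in Theorem~\ref{FK}, and handling it carefully is the heart of the argument; once the surface is arranged, the Mayer--Vietoris setup and the elementary‑divisor bound are routine.
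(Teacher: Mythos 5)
The paper does not prove this statement at all: it is quoted verbatim from Friedl--Kim \cite{FK} and used as a black box, so there is no in-paper proof to compare against. Judged on its own terms, your outline does reproduce the correct strategy (the Friedl--Kim argument is indeed a twisted version of McMullen's: cut $X$ along a norm-minimizing surface $S$ dual to $\phi$, run the Milnor/Mayer--Vietoris sequence for the infinite cyclic cover with $\mathbb{F}^k$-coefficients, split off the free part via $\ker\partial_0$, and bound the torsion of $\operatorname{coker}\partial_1$ by the rank of a matrix of the form $tA-B$). The reduction to $\dim_{\mathbb{F}}\operatorname{tor}H_1\le k(\|\phi\|_T+1)$, the splitting argument, and the elementary-divisor estimate $\sum_i\deg d_i\le\rho\le\dim_{\mathbb{F}}H_1(S;\mathbb{F}^k)$ are all sound.

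The genuine gap is the one you flag yourself: the existence of a norm-minimizing representative with exactly one component (after discarding closed pieces), which is what turns $\dim_{\mathbb{F}}H_1(S;\mathbb{F}^k)=k\,c+k\|\phi\|_T$ (with $c$ the number of components) into $k(1+\|\phi\|_T)$. As written, your bound fails for any disconnected minimizer: each extra annulus component contributes $k$ to $\dim H_1(S;\mathbb{F}^k)$ and nothing to $\|\phi\|_T$, and each closed torus component contributes $2k$. Saying $S$ can be made ``essentially connected'' by ``absorbing the $H_0(S_j;\mathbb{F}^k)$ contributions of the rest'' is not an argument; what is actually needed is to show that for $\phi$ dual to $\mu_i$ one may (i) discard all closed components (using that $H_2(X)$ is carried by $\partial X$ for a link exterior in a homology sphere, so closed components die in $H_2(X,\partial X)$), (ii) eliminate boundary curves on the tori $\partial\nu(L_j)$, $j\neq i$, and trivial curve pairs on $\partial\nu(L_i)$, and (iii) retain a single component carrying the longitudinal boundary curve while showing the remaining components can be deleted without changing the relative homology class or increasing complexity. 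Until that surface lemma is proved, the final inequality does not follow, so the proposal is an essentially correct skeleton with its load-bearing step missing.
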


Theorem \ref{tildapolys} will help us improve the bound of the
Thurston norm for the class $(0,1)$ for both $L_\beta$ and
$L_\gamma$. Recall from Section 2.2 that for $L_\beta$, the
Thurston norm of a general cohomology class $(p,q)$ is $|p+q|$.
So for this link, $\|(0,1)\|_T=1$. In this case, Knottwister
computes the $\widetilde{\Delta}_1^\alpha(t)$ to be $1-t+t^2$
over $\mathbb{F}_{13}$ when $\alpha$ is trivial (so $k=1$).
Therefore, for the pair $(L_\beta,(0,1))$ we get
$$\|(0,1)\|_T \geq 2-1=1$$ which is a sharp bound.

Now, we consider the same cohomology classes on $L_\gamma$. We
know from our calculations in section 2.2 that for this link,
$\|\phi\|_T=\|(p,q)\|_T= 7|p|+|6p+q|$. So for this link
$\|(0,1)\|_T=1$. Knottwister yields the
$\widetilde{\Delta}_1^\alpha(t)= 1-t+t^2$ over
$\mathbb{F}_{13}$ again, when $\alpha$ is trivial, which is
again a sharp bound.

\bibliographystyle{amsplain}

\end{document}